\DeclareMathAlphabet{\mathpzc}{OT1}{pzc}{m}{it}
\newcommand{\bff}{\mathpzc{f}}
\newcommand{\mcA}{\mathcal{A}}
\newcommand{\mcB}{\mathcal{B}}
\newcommand{\mcF}{\mathcal{F}}
\newcommand{\mcL}{\mathcal{L}}
\newcommand{\ul}{\underline}
\newcommand{\citecond}[1]{\textnormal{\textbf{#1}}}
\newcommand{\hnorm}{|\hspace{-.1em}|\hspace{-.1em}|}
\newcommand{\nnsup}[2]{\sup_{0\not=#1\in#2}}
\newcommand{\les}{\lesssim}
\newcommand{\ges}{\gtrsim}
\newtheorem{remark}[theorem]{Remark}
\newtheorem{notation}[theorem]{Notation}
\title{A robust multigrid method for the time-dependent Stokes 
	problem\thanks{The research was funded by the Austrian Science Fund (FWF):
	J3362-N25.}}
\author{Stefan Takacs\thanks{Faculty of Mathematics,
			TU Chemnitz, Germany, ({\tt stefan.takacs@numa.uni-linz.ac.at})}}
\begin{document}

\maketitle

\begin{abstract}
We propose a coupled multigrid method
for generalized Stokes flow problems. Such problems occur as subproblems
in implicit time-stepping approaches for time-dependent Stokes problems. 
The discretized Stokes system is a large-scale linear system whose condition
number depends on the grid size of
the spatial discretization and of the length of the time step. Recently,
for this problem a coupled multigrid method has been proposed, where
in each smoothing step a Poisson problem has to be solved (approximately)
for the pressure field.
In the present paper, we propose a coupled multigrid method where
the solution of such sub-problems is not needed.
We prove that the proposed method shows robust convergence behavior
in the grid size of the spatial discretization and of the length 
of the time-step.
\end{abstract}

\begin{keywords} 
Generalized Stokes problem, coupled multigrid methods, robustness
\end{keywords}

\begin{AMS}
65N55 65N22 76D07 65N30
\end{AMS}

\pagestyle{myheadings}
\thispagestyle{plain}
\markboth{S. TAKACS}{MULTIGRID FOR TIME-DEPENDENT STOKES PROBLEM}

\normalem

\section{Introduction}\label{sec:1}

We consider the following model
problem \emph{(generalized Stokes flow problem}).
Let $\Omega\subset\mathbb{R}^2$ be a bounded polygonal domain and
assume $f \in [L^2(\Omega)]^2$ and $g \in L^2(\Omega)$ to be given.
Find a velocity field $u$ and a pressure distribution $p$ such that
\begin{equation}\label{eq:stokes}
\begin{aligned}
        -\Delta u + \beta u + \nabla p &= f \mbox{ in } \Omega,  \\
        \nabla \cdot u &= g \mbox{ in } \Omega, \\
        u &= 0
        \mbox{ on } \partial \Omega \\
\end{aligned}
\end{equation}
is satisfied. $\beta>0$ is assumed to be a given parameter. 
The problem~\eqref{eq:stokes} appears as an auxiliary problem for
implicit time-stepping approaches to solve an incompressible, time-dependent
Stokes flow problem. In this case, the parameter~$\beta$ is proportional
to the inverse of the length of the time-step, scaled by a viscosity parameter.

To obtain existence and uniqueness of the solution, we further require 
$\int_{\Omega} p \,\mbox{d}x = \int_{\Omega} g \,\mbox{d}x =0$. Note that
the analysis presented in this paper is (due to the use of regularity results)
restricted to convex domains. However, the numerical method presented in
the paper can be applied also to non-convex domains.

The discretization of the problem leads to an indefinite linear system with 
saddle-point structure. The main goal of this work is to construct and to 
analyze numerical methods that produce an approximate solution
to the problem, where the computational complexity can be
bounded by the number of unknowns times a constant which is independent of 
the grid level (of the spatial discretization)
and the choice of~$\beta$, in particular for large values of~$\beta$ (which correspond to small time steps).

For the solution of such a saddle-point problem, there are several possibilities.
In~\cite{Bramble:Pasciak:1997,Kobelkov:Olshanskii:2000,Mardal:Winther:2004,
Mardal:Winther:2004err,Olshanskii:Peters:Reusken:2006,Zulehner:2010}, various kinds of
preconditioners have been proposed for this problem which can
be combined with a Krylov-subspace method as outer iteration scheme to yield an 
iterative solver for the problem.

An alternative is to apply a multigrid algorithm directly to the coupled system.
On the one hand such methods are typically quite fast, on the other hand such an approach
does not require an outer iteration scheme.
For~$\beta=0$, problem~\eqref{eq:stokes} is the standard (stationary) Stokes problem.
For this case several multigrid solvers are available, see, 
e.g.,~\cite{Verfuehrth:1984,Vanka:1986,Brenner:1990,Braess:Sarazin:1997,Zulehner:2000} and the papers 
cited in~\cite{Wesseling:Oosterlee:2001,Olshanskii:2012}. The construction of a 
multigrid method for $\beta>0$, particularly if
the method is desired to show robust convergence behavior in~$\beta$, is more involved,
see~\cite{Larin:Reusken:2008} for an overview and numerical results.
Recently, an important step forward has been archived by Olshanskii, who has
proposed such a robust multigrid method, see~\cite{Olshanskii:2012}.
In the named paper however, a Poisson problem for the pressure has to be solved
approximately (for example by applying one V-cycle, cf. Section~8 in~\cite{Olshanskii:2012}) 
for each step of the smoothing iteration. (We will comment on this in Remarks~\ref{rem:on:smoothers1}
and~\ref{rem:on:smoothers2}).

The goal of the present paper is to drop this requirement and, therefore, reduce
the computational costs. We propose a multigrid solver where the smoother is a \emph{simple}
linear iteration scheme and we prove that the proposed method is robust in the
grid size of the spatial discretization and in the choice of~$\beta$. We will
present a convergence proof for our multigrid method
based on the classical splitting of the analysis
into smoothing property and approximation property, see~\cite{Hackbusch:1985}.

The results of this paper form a basis of the convergence analysis of an all-at-once
multigrid method for the Stokes \emph{optimal control} problem, cf.~\cite{Takacs:2013a}.

This paper is organized as follows. In Section~\ref{sec:2} we will
introduce the variational formulation and discuss its discretization.
The multigrid framework will be presented in Section~\ref{sec:3}.
In Section~\ref{sec:3aa} we will discuss the choice of the smoother.
The proof of the approximation property will be given in Section~\ref{sec:3a}.
Numerical results which illustrate the convergence result will be presented
in Section~\ref{sec:4}. In Section~\ref{sec:5} we will close with conclusions.

\section{Variational formulation and discretization}\label{sec:2}

Here and in what follows,
$L^2(\Omega)$ and~$H^1(\Omega)$ denote the standard Lebesgue and
Sobolev spaces with associated standard
norms~$\|\cdot\|_{L^2(\Omega)}$ and~$\|\cdot\|_{H^1(\Omega)}$,
respectively. The space $L^2_0(\Omega)$ is the space of all functions in $L^2(\Omega)$ with
mean value $0$.
The space $H^1_0(\Omega)$ is the space of all functions in $H^1(\Omega)$ that vanish on the
boundary. Both spaces are equipped with  standard norms, i.e.,~${\|\cdot\|_{L^2_0(\Omega)}}:={\|\cdot\|_{L^2(\Omega)}}$
and~${\|\cdot\|_{H^1_0(\Omega)}}:={\|\cdot\|_{H^1(\Omega)}}$.

Using these spaces, we can set up the variational formulation of~\eqref{eq:stokes},
which reads as follows. Find $u\in U:= [H^1_0(\Omega)]^2$ and $p\in P:= L^2_0(\Omega)$ such that
\begin{align*}
		(\nabla u,\nabla\tilde{u})_{L^2(\Omega)} 
		+\beta( u,\tilde{u})_{L^2(\Omega)} 
		+(p,\nabla\cdot\tilde{u})_{L^2(\Omega)}
		&=(f,\tilde{u})_{L^2(\Omega)}\\
          (\nabla\cdot u,\tilde{p})_{L^2(\Omega)} 
		&= (g,\tilde{p})_{L^2(\Omega)}
\end{align*}
holds for all $\tilde{u}\in U$ and $\tilde{p}\in P$.
Certainly, the variational problem can be rewritten as 
one variational equation
as follows. Find $x \in X$ such that
\begin{equation}\label{eq:gen:bil}
        \mcB(x,\tilde{x}) = \mcF(\tilde{x}) \qquad \mbox{for all }\tilde{x} \in X,
\end{equation}
where $X:=U\times P$, $x:=(u,p)$, $\tilde{x}:=(\tilde{u},\tilde{p})$  and
\begin{align*}\nonumber
        \mcB((u,p),(\tilde{u},\tilde{p})) &:=
		(\nabla u,\nabla\tilde{u})_{L^2(\Omega)} 
		+\beta ( u,\tilde{u})_{L^2(\Omega)} 
		+(p,\nabla\cdot\tilde{u})_{L^2(\Omega)}
		+(\nabla\cdot u,\tilde{p})_{L^2(\Omega)} ,
	\\
	        \mcF(\tilde{u},\tilde{p}) &:= (f,\tilde{u})_{L^2(\Omega)}+(g,\tilde{p})_{L^2(\Omega)}.
\end{align*}

We are interested in finding an approximative solution for equation~\eqref{eq:gen:bil}. 
The convergence analysis follows standard approaches, i.e., we show that the problem in question is
well posed in some norm $\|\cdot\|_X$ (which is for the Poisson problem the $H^1$-norm and for the
standard Stokes problem the pair of $H^1$ for the velocity and $L^2$ for the pressure). In our case
this norm will be parameter-dependent. In a second step, we will introduce a properly scaled
$L^2$-like norm $\hnorm\cdot\hnorm_{k}$, which depends on the grid level $k$, see below. We will use
an inverse inequality to show that $\|\cdot\|_X\le C \hnorm\cdot\hnorm_{k}$ holds. The convergence of the multigrid
method (smoothing property and approximation property) will be shown in the norm $\hnorm\cdot\hnorm_{k}$.

The main focus of the paper is the proof of the approximation property. Throughout the paper we
will comment on the differences to the approach presented in~\cite{Olshanskii:2012}, which allows
us to drop the requirement of solving (approximately) Poisson problems. The key idea of the 
proof of the approximation property follows classical approaches. However, the way of constructing
the norms is non-standard. For simplicity, we follow the abstract framework introduced
in~\cite{Takacs:Zulehner:2012}.
First we introduce the following convenient notation.
\begin{notation}
	Throughout this paper,~$C>0$ is a generic constant, independent of
	the grid level~$k$ and the choice of the parameter~$\beta$.
	For any scalars~$a$ and~$b$, we write~$a \les b$ (or $b \ges a$) if there is a 
	constant~$C>0$ such that~$a \le C\,b$. We write~$a\eqsim b$ if~$a\les b\les a$.
\end{notation}

Let the Hilbert spaces $X$, $U$ and $P$ (introduced above) be equipped
with the following norms:
\begin{equation}\label{eq:x:norm}
\begin{aligned}
        \|x\|_X^2  &:=\|(u,p)\|_X^2 := \|u\|_U^2 + \|p\|_{P}^2,\\
      		\|u\|_U^2 &:=  \|u\|_{H^1(\Omega)}^2+\beta \|u\|_{L^2(\Omega)}^2\mbox{ and}\\
       	\|p\|_P^2 &:=  \nnsup{w}{[H^1_0(\Omega)]^2} \frac{(p,\nabla \cdot
		w)_{L^2(\Omega)}^2}{\|w\|_{H^1(\Omega)}^2+\beta \|w\|_{L^2(\Omega)}^2}.
\end{aligned}
\end{equation}
Lemma~2.1 in~\cite{Olshanskii:2012} states the following stability result.
\begin{lemma}
        The relation
		\begin{align*}
			\tag{\textbf{A1}}
			\|x\|_{X} \les \nnsup{\tilde{x}}{X}
				\frac{\mcB(x,\tilde{x})}{\|\tilde{x}\|_{X}}
				\les \|x\|_{X}
		\end{align*}
		holds for all $x\in X$.
\end{lemma}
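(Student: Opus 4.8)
The plan is to recognize~(\textbf{A1}) as the inf-sup (Babu\v{s}ka--Brezzi) well-posedness statement for the saddle-point form $\mcB$ with respect to the parameter-dependent norm $\|\cdot\|_X$, and to exploit that the norm on $P$ has been chosen precisely so that the relevant inf-sup constant equals one independently of $\beta$. Writing $a(u,\tilde u):=(\nabla u,\nabla\tilde u)_{L^2(\Omega)}+\beta( u,\tilde u)_{L^2(\Omega)}$ and $b(u,q):=(\nabla\cdot u,q)_{L^2(\Omega)}$, I first record the two structural identities on which everything rests: $a(u,u)=\|u\|_U^2$ holds exactly, and the definition of $\|\cdot\|_P$ in~\eqref{eq:x:norm} is literally $\|p\|_P=\nnsup{w}{[H^1_0(\Omega)]^2} b(w,p)/\|w\|_U$.

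The upper bound (continuity) is then immediate. By the Cauchy--Schwarz inequality applied to the inner product defining $a$ one has $a(u,\tilde u)\le\|u\|_U\|\tilde u\|_U$, while the very definition of $\|\cdot\|_P$ gives $b(\tilde u,p)\le\|\tilde u\|_U\|p\|_P$ and $b(u,\tilde p)\le\|u\|_U\|\tilde p\|_P$. Summing the three contributions and applying $2ab\le a^2+b^2$ yields $\mcB(x,\tilde x)\les\|x\|_X\|\tilde x\|_X$, which is the right-hand inequality of~(\textbf{A1}).

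For the lower bound I would construct, for a given $x=(u,p)$, an explicit test function. First choose $w\in[H^1_0(\Omega)]^2$ nearly realizing the supremum defining $\|p\|_P$, normalized so that $\|w\|_U=\|p\|_P$ and $b(w,p)\ge(1-\e)\|p\|_P^2$. Then test with $\tilde x:=(u+\d w,\,-p)$ for a fixed $\d>0$. The pressure-coupling terms cancel, since $(p,\nabla\cdot u)$ appears with opposite signs, leaving $\mcB(x,\tilde x)=\|u\|_U^2+\d\,a(u,w)+\d\,b(w,p)\ge\|u\|_U^2-\d\|u\|_U\|p\|_P+\d(1-\e)\|p\|_P^2$. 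A Young inequality on the cross term followed by a suitable choice of $\d$ (and $\e$) gives $\mcB(x,\tilde x)\ges\|u\|_U^2+\|p\|_P^2=\|x\|_X^2$, while $\|\tilde x\|_X^2\le 2\|u\|_U^2+2\d^2\|p\|_P^2+\|p\|_P^2\les\|x\|_X^2$. Combining $\mcB(x,\tilde x)\ges\|x\|_X^2$ with $\|\tilde x\|_X\les\|x\|_X$ yields $\mcB(x,\tilde x)/\|\tilde x\|_X\ges\|x\|_X$, and since the supremum over $\tilde x$ is at least this value, the left-hand inequality of~(\textbf{A1}) follows.

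The main obstacle is conceptual rather than computational: one must be sure that every constant is independent of $\beta$. This is exactly what the construction of the norms secures, because the coercivity identity $a(u,u)=\|u\|_U^2$ and the inf-sup relation $\|p\|_P=\nnsup{w}{[H^1_0(\Omega)]^2} b(w,p)/\|w\|_U$ hold with constant one for every $\beta>0$; the only estimates introduced in the argument (Cauchy--Schwarz, Young, and the threshold on $\d$) carry purely numerical constants. The single technical point that still requires care is that the supremum defining $\|\cdot\|_P$ is in general not attained, so the test function must be built from a near-maximizer and the parameter $\e$ either sent to zero or absorbed into the generic constant; apart from that, the argument is the standard saddle-point stability proof.
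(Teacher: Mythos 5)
Your proof is correct in substance, but note first that the paper itself does not prove this lemma at all: it simply cites Lemma~2.1 of~\cite{Olshanskii:2012}, so your self-contained Babu\v{s}ka--Brezzi-style argument is a genuinely different (and more informative) route than the paper's proof-by-citation. The core of your argument is sound: the identity $\|p\|_P=\nnsup{w}{[H^1_0(\Omega)]^2}b(w,p)/\|w\|_U$ holds exactly by definition of the norm, the test function $\tilde{x}=(u+\d w,-p)$ built from a normalized near-maximizer does make the pressure-coupling terms cancel, and the resulting bounds $\mcB(x,\tilde{x})\ges\|x\|_X^2$ and $\|\tilde{x}\|_X\les\|x\|_X$ involve only numerical constants and the Friedrichs constant of $\Omega$, hence are independent of $\beta$. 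One slip should be corrected: your claim that $a(u,u)=\|u\|_U^2$ holds \emph{exactly} is false, since $\|u\|_{H^1(\Omega)}^2=\|u\|_{L^2(\Omega)}^2+\|\nabla u\|_{L^2(\Omega)}^2$, so in fact $a(u,u)=\|u\|_U^2-\|u\|_{L^2(\Omega)}^2$. This is harmless --- Friedrichs' inequality on $H^1_0(\Omega)$ gives $a(u,u)\eqsim\|u\|_U^2$ with constants depending only on $\Omega$ and not on $\beta$, and your Young-inequality step only needs coercivity up to such a constant (the admissible range of $\d$ then depends on that constant, which is fine) --- but the proof should state the equivalence rather than the equality. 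As for what each approach buys: the citation keeps the paper short and consistent with the reference whose discrete counterpart (estimate~(2.16) of~\cite{Olshanskii:2012}) is invoked later for~\citecond{(A1a)}, whereas your argument makes explicit the conceptual point that the norm $\|\cdot\|_P$ is defined precisely so that the inf-sup constant is absorbed into the norm, which is where the $\beta$-robustness actually comes from.
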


Using the following notation, we can express the norms in a nicer way.
\begin{notation}
	For any Hilbert space~$A$, the term $A^*$ denotes its dual space equipped
	with the dual norm
        \begin{equation*}
                \|u\|_{A^*} := \nnsup{w}{A}\frac{\langle u,w \rangle}{\|w\|_A},
        \end{equation*}
	where $\langle u,\cdot\rangle := u(\cdot)$ denotes the duality pairing.

	For any Hilbert space~$A$ and any scalar $\alpha>0$, the term~$\alpha\, A$ 
	denotes the space on the underlying set of the Hilbert space~$A$ equipped with the norm
        \begin{equation*}
                \|u\|_{\alpha\,A}^2 := \alpha \|u\|_A^2.
       \end{equation*}
        For any two Hilbert spaces~$A$ and~$B$, the term
        $A\cap B$ denotes the space on the intersection of the underlying sets, 
	$\{u\in A \cap B\}$, equipped with the norm
        \begin{equation*} 
                \|u\|_{A\cap B}^2 := \|u\|_A^2 + \|u\|_B^2
	\end{equation*}
	and the term~$A+B$ denotes the space on the algebraic sum of the
	underlying sets, $\{u_1+u_2\;:\; u_1\in A, u_2\in B\}$, equipped with the norm
        \begin{equation*}
                \|u\|_{A+B}^2 := \inf_{u_1\in A, u_2\in B, u=u_1+u_2} 
			\|u_1\|_A^2 + \|u_2\|_B^2.
        \end{equation*}
\end{notation}

The spaces $A^*$, $\alpha\, A$, $A\cap B$ and $A+B$ are Hilbert spaces. The fact that $A^*$
is a Hilbert space follows directly from the Riesz representation theorem, see, e.g.,
Theorem~1.2 in~\cite{Adams:Fournier}. The fact that $\alpha\, A$ is a Hilbert space
is obvious and for the latter two see, e.g., Lemma~2.3.1 in~\cite{Bergh:Loefstroem:1976}.

We immediately see that the norm on $U$ can be rewritten as follows:
\begin{equation*}
        \|u\|_U = \|u\|_{H^1(\Omega)\cap \beta L^2(\Omega)}.
\end{equation*}

To reformulate the norm on $P$, we need the following regularity assumption.
	\begin{description}[style=sameline,leftmargin=1.1cm]
	        \item[\citecond{(R)}] \emph{Regularity of the Stokes problem.}
	        Let $f\in [L^2(\Omega)]^2$ and $g\in H^1_0(\Omega)\cap L^2_0(\Omega)$ be arbitrary but fixed and
	        $(u,p)\in [H^1_0(\Omega)]^2\times L^2_0(\Omega)$ be the solution of the
		Stokes problem, i.e., such that
	        \begin{equation*}
	        \begin{array}{lclclcl}
	          (\nabla u,\nabla \tilde{u})_{L^2(\Omega)} &+&(p,\nabla
		\tilde{u})_{L^2(\Omega)}  &=&(f,\tilde{u})_{L^2(\Omega)}\\
	          (\nabla u,\tilde{p})_{L^2(\Omega)} &&& =& (g,\tilde{p})_{L^2(\Omega)} \\
	        \end{array}        
	        \end{equation*}
	        holds for all $(\tilde{u},\tilde{p}) \in [H^1_0(\Omega)]^2\times L^2_0(\Omega)$.
	        Then $(u,p)\in [H^2(\Omega)]^2\times H^1(\Omega)$ and
	        \begin{equation*}
	                \|u\|_{H^2(\Omega)}^2 + \|p\|_{H^1(\Omega)}^2 \les \|f\|_{L^2(\Omega)}^2+\|g\|_{H^1(\Omega)}^2.
	        \end{equation*}
	\end{description}
\begin{lemma}
	The regularity assumption~\citecond{(R)} holds for $\Omega\subset\mathbb{R}^2$ being a convex polygonal domain.
\end{lemma}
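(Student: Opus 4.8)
The statement is a classical elliptic-regularity result for the stationary Stokes system on a convex polygon, so my plan is not to reprove the deep corner analysis from scratch but to reduce the problem with the inhomogeneous divergence constraint to one with a divergence-free velocity and an $[L^2(\Omega)]^2$ right-hand side, and then to invoke the known $[H^2(\Omega)]^2\times H^1(\Omega)$ regularity of the latter. Note first that the two variational equations in~\citecond{(R)} are the standard Stokes weak form (the terms should be read as $(p,\nabla\cdot\tilde u)_{L^2(\Omega)}$ and $(\nabla\cdot u,\tilde p)_{L^2(\Omega)}$), i.e.\ $(u,p)$ is the weak solution of $-\Delta u+\nabla p=f$, $\nabla\cdot u=g$, $u=0$ on $\partial\Omega$.

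The first step is to remove the inhomogeneity $g$ from the divergence constraint. Since $g\in H^1_0(\Omega)\cap L^2_0(\Omega)$, in particular $\int_\Omega g\,\mbox{d}x=0$, the equation $\nabla\cdot u_0=g$ is solvable with homogeneous Dirichlet data. I would use the Bogovskii operator, which is available here because a convex polygon is a bounded Lipschitz domain that is star-shaped with respect to a ball, to construct $u_0\in[H^2(\Omega)\cap H^1_0(\Omega)]^2$ with $\nabla\cdot u_0=g$ and $\|u_0\|_{H^2(\Omega)}\les\|g\|_{H^1(\Omega)}$. Setting $v:=u-u_0$ and $q:=p$, the pair $(v,q)\in[H^1_0(\Omega)]^2\times L^2_0(\Omega)$ then solves the Stokes problem with homogeneous divergence constraint $\nabla\cdot v=0$ and right-hand side $\tilde f:=f+\Delta u_0\in[L^2(\Omega)]^2$, where $\|\tilde f\|_{L^2(\Omega)}\les\|f\|_{L^2(\Omega)}+\|g\|_{H^1(\Omega)}$.

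The core of the argument is the $H^2$-regularity of the divergence-free Stokes problem on a convex polygon: for $\tilde f\in[L^2(\Omega)]^2$ the solution satisfies $(v,q)\in[H^2(\Omega)]^2\times H^1(\Omega)$ with $\|v\|_{H^2(\Omega)}+\|q\|_{H^1(\Omega)}\les\|\tilde f\|_{L^2(\Omega)}$. I would cite this (Kellogg--Osborn, Dauge, Grisvard) rather than reprove it; it rests on the corner-singularity expansion of the Stokes operator on polygonal domains, the decisive point being that on a \emph{convex} domain the leading singular exponents at every corner exceed one, so that no singularity function of regularity below $H^2$ enters the solution. Undoing the substitution via $u=v+u_0$, $p=q$ and combining the two estimates through $\|u\|_{H^2(\Omega)}+\|p\|_{H^1(\Omega)}\le\|v\|_{H^2(\Omega)}+\|u_0\|_{H^2(\Omega)}+\|q\|_{H^1(\Omega)}\les\|f\|_{L^2(\Omega)}+\|g\|_{H^1(\Omega)}$, squaring yields $\|u\|_{H^2(\Omega)}^2+\|p\|_{H^1(\Omega)}^2\les\|f\|_{L^2(\Omega)}^2+\|g\|_{H^1(\Omega)}^2$, which is exactly assertion~\citecond{(R)}.

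The main obstacle is the convex-polygon regularity invoked in the third step: unlike on smooth domains, $H^2$-regularity is not automatic and genuinely uses convexity to exclude the strong corner singularities, so all the real difficulty resides there and I would rely entirely on the cited literature. A secondary technical point is to make sure the Bogovskii lifting simultaneously delivers $H^2$-regularity, the correct bound in terms of $\|g\|_{H^1(\Omega)}$, and homogeneous boundary values; this is precisely why the hypothesis $g\in H^1_0(\Omega)$ (and not merely $g\in L^2_0(\Omega)$) is imposed.
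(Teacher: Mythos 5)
Your proof is correct, but it follows a genuinely different route from the paper's. The paper applies Theorem~2 of Kellogg--Osborn directly, which treats the \emph{inhomogeneous} divergence constraint natively: it gives $\|u\|_{H^2(\Omega)}^2+\|\nabla p\|_{L^2(\Omega)}^2 \les \|f\|_{L^2(\Omega)}^2+\|\nabla g\|_{L^2(\Omega)}^2+\|\delta^{-1}g\|_{L^2(\Omega)}^2$, where $\delta$ is the distance to the nearest corner; the hypothesis $g\in H^1_0(\Omega)$ then enters through Kellogg--Osborn's Lemma~2, a Hardy-type inequality $\|\delta^{-1}g\|_{L^2(\Omega)}\les\|g\|_{H^1(\Omega)}$ valid for functions vanishing on the boundary, and the proof closes with Poincar\'e's inequality for $p\in L^2_0(\Omega)$. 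You instead eliminate $g$ by a Bogovskii lifting $u_0\in[H^2(\Omega)\cap H^1_0(\Omega)]^2$ with $\nabla\cdot u_0=g$ and $\|u_0\|_{H^2(\Omega)}\les\|g\|_{H^1(\Omega)}$, and then only need the divergence-free case ($g=0$) of the convex-polygon regularity theorem. The trade-off: the paper's argument is shorter because Kellogg--Osborn already did the work for $g\neq 0$, while yours needs only the more commonly quoted $g=0$ regularity statement but must import the $H^1_0\to H^2$ mapping property of the Bogovskii operator (valid since a convex polygon is star-shaped with respect to a ball, cf.\ Galdi's Theorem~III.3.3); note that this bound genuinely requires $g$ to have zero trace, so your approach uses the hypothesis $g\in H^1_0(\Omega)$ at exactly the same conceptual point where the paper uses the corner Hardy inequality --- consistent with the paper's remark that this boundary condition on $g$ cannot be dropped entirely. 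Both arguments rest on the same deep ingredient, the convexity-based exclusion of corner singularities below $H^2$, which neither proof (rightly) attempts to reprove.
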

\begin{proof}
	Theorem~2 in~\cite{Kellogg:Osborn:1976} states (in the notation of the present paper) that
	provided $f\in [L^2(\Omega)]^2$, $g \in H^1(\Omega)\cap L^2_0(\Omega)$ and $\delta^{-1} g \in L^2(\Omega)$ that
	\begin{equation*}
		\|u\|_{H^2(\Omega)}^2 + \|\nabla p\|_{L^2(\Omega)}^2 \les \|f\|_{L^2(\Omega)}^2 
			+ \|\nabla g\|_{L^2(\Omega)}^2 + \|\delta^{-1} g\|_{L^2(\Omega)}^2,
	\end{equation*}
	is satisfied, where $\delta:\Omega\rightarrow \mathbb{R}$ is the distance to the closest corner of the polygonal
	domain $\Omega$.
	Lemma~2 in~\cite{Kellogg:Osborn:1976} states that
	$\|\delta^{-1} g\|_{L^2(\Omega)} \les \|g\|_{H^1(\Omega)}$ is satisfied for all $g \in H^1_0(\Omega)$.
	Combining these results, we obtain that 
	\begin{equation*}
		\|u\|_{H^2(\Omega)}^2 + \|\nabla p\|_{L^2(\Omega)}^2 \les \|f\|_{L^2(\Omega)}^2
			+ \|g\|_{H^1(\Omega)}^2
	\end{equation*}
	is satisfied for all $f\in [L^2(\Omega)]^2$ and $g \in H^1_0(\Omega)\cap L^2_0(\Omega)$.
	As $p\in L^2_0(\Omega)$ was assumed, Poincar\'e's inequality 
	states that $\|p\|_{H^1(\Omega)} \les \|\nabla p\|_{L^2(\Omega)}$, which finishes the proof.
\end{proof}

Note that we assume that $g$ satisfies
homogeneous Dirichlet boundary conditions. This condition can be weakened but it is not possible to drop
such a condition completely, cf.~\cite{Kellogg:Osborn:1976}.
 
\begin{lemma}\label{lem:pe}
	On domains $\Omega$, where~\citecond{(R)} is satisfied,
	\begin{equation*}
		\|p\|_P \eqsim \|p\|_{ L^2(\Omega) + \beta^{-1} H^1(\Omega)}
	\end{equation*}
	holds for all $p\in L^2_0(\Omega)$.
\end{lemma}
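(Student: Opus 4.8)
The plan is to prove the two-sided estimate by first reducing it, through duality, to a statement about the gradient operator between the relevant sum spaces, and then to isolate the single regularity ingredient that carries the whole argument. The starting point is to reformulate $\|p\|_P$ as a dual norm: the supremum in~\eqref{eq:x:norm} is exactly the dual norm of the functional $v\mapsto-(p,\nabla\cdot v)=\langle\nabla p,v\rangle$ over $U=[H^1_0(\Omega)]^2\cap\beta[L^2(\Omega)]^2$, so $\|p\|_P=\|\nabla p\|_{U^*}$. Invoking the duality of intersection and sum for Hilbert spaces (Lemma~2.3.1 in~\cite{Bergh:Loefstroem:1976}) together with $([H^1_0(\Omega)]^2)^*=[H^{-1}(\Omega)]^2$ and $(\beta[L^2(\Omega)]^2)^*=\beta^{-1}[L^2(\Omega)]^2$ gives
\[
 \|p\|_P=\|\nabla p\|_{[H^{-1}(\Omega)]^2+\beta^{-1}[L^2(\Omega)]^2},
\]
so the claim is equivalent to $\|\nabla p\|_{[H^{-1}]^2+\beta^{-1}[L^2]^2}\eqsim\|p\|_{L^2+\beta^{-1}H^1}$, a statement about $\nabla$ acting between these two scales of sum spaces.

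The upper bound $\|p\|_P\les\|p\|_{L^2+\beta^{-1}H^1}$ is elementary and needs no regularity: from any scalar splitting $p=p_1+p_2$ I take the induced vector splitting $\nabla p=\nabla p_1+\nabla p_2$ and use $\|\nabla p_1\|_{[H^{-1}]^2}\les\|p_1\|_{L^2}$ (since $\|\nabla\cdot v\|_{L^2}\les\|\nabla v\|_{L^2}$) and $\|\nabla p_2\|_{[L^2]^2}\le\|p_2\|_{H^1}$, and then pass to the infimum; equivalently one tests directly in~\eqref{eq:x:norm}, integrating by parts against $p_2\in H^1(\Omega)$.

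For the lower bound, which is the heart of the matter, I start from a near-optimal vector splitting $\nabla p=v_1+v_2$ with $v_1\in[H^{-1}]^2$, $v_2\in[L^2]^2$ and convert it into a scalar splitting of comparable cost by projecting onto gradient fields. Let $\Pi$ denote the $L^2$-orthogonal (Helmholtz) projection onto gradients, extended to $[H^{-1}]^2$; since $\nabla p$ is itself a gradient, $\nabla p=\Pi v_1+\Pi v_2$. I set $\Pi v_2=\nabla p_2$ with $p_2\in H^1(\Omega)\cap L^2_0(\Omega)$ and $p_1:=p-p_2\in L^2_0(\Omega)$, so that $\nabla p_1=\Pi v_1$. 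The $H^1$-part is then controlled by the Poincar\'e--Wirtinger inequality and the $L^2$-boundedness of $\Pi$, namely $\|p_2\|_{H^1}\eqsim\|\nabla p_2\|_{L^2}\le\|v_2\|_{L^2}$, while the $L^2$-part is controlled by the Ne\v{c}as inequality $\|p_1\|_{L^2}\les\|\nabla p_1\|_{[H^{-1}]^2}=\|\Pi v_1\|_{[H^{-1}]^2}$. Summing and taking the infimum over all vector splittings yields $\|p\|_{L^2+\beta^{-1}H^1}\les\|\nabla p\|_{U^*}=\|p\|_P$.

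I expect the main obstacle to be the single remaining estimate $\|\Pi v_1\|_{[H^{-1}]^2}\les\|v_1\|_{[H^{-1}]^2}$, i.e. the boundedness of the Helmholtz projection in the negative-order norm (together with the fact that its extension still fixes $\nabla p$). On the torus $\Pi$ is a Fourier multiplier and is bounded on every Sobolev scale, but on a bounded domain this is a genuine elliptic-regularity statement: it amounts to an $H^1$ a-priori bound for the Poisson problem associated with $\Pi$, which is precisely the kind of result furnished by assumption~\citecond{(R)} and which forces the restriction to convex $\Omega$. Every other ingredient (the duality reformulation, Poincar\'e--Wirtinger, the Ne\v{c}as inequality, and the $L^2$-boundedness of $\Pi$) is soft and domain-independent, so the convexity hypothesis and the real work are concentrated in this one negative-norm regularity estimate. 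A more hands-on variant avoids the projector by taking the maximizer $w\in U$ of~\eqref{eq:x:norm} (so $\|w\|_U=\|p\|_P$), setting $p_1:=\nabla\cdot w$ (whence $\|p_1\|_{L^2}\les\|p\|_P$ at once) and $p_2:=p-\nabla\cdot w$; there the same obstacle reappears as the need to bound the divergence-free, boundary-layer part of $w$, again by means of~\citecond{(R)}.
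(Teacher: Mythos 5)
Your duality reformulation $\|p\|_P=\|\nabla p\|_{[H^{-1}(\Omega)]^2+\beta^{-1}[L^2(\Omega)]^2}$ and the upper bound $\|p\|_P\les\|p\|_{L^2(\Omega)+\beta^{-1}H^1(\Omega)}$ are correct, but the converse direction---the heart of the lemma---has a genuine gap at exactly the step you flag, and it cannot be closed the way you suggest: the estimate $\|\Pi v\|_{[H^{-1}]^2}\les\|v\|_{[H^{-1}]^2}$ for the Helmholtz projection onto gradients is \emph{false}, even on the unit disk, so no appeal to \citecond{(R)} or to convexity can furnish it. Concretely, let $\Omega$ be the unit disk, $\zeta(r,\theta):=(2r-r^2)\cos\theta$ (so $\zeta\in C^\infty(\overline{\Omega})\cap L^2_0(\Omega)$, $\partial_n\zeta=0$ on $\partial\Omega$, while the tangential derivative $-\sin\theta$ does not vanish there), and set $v_n:=n\chi_{A_n}\nabla\zeta$ with $A_n:=\{1-1/n<r<1\}$. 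For $w\in[H^1_0(\Omega)]^2$, Hardy's inequality gives $\|w\|_{L^2(A_n)}\les n^{-1}\|\nabla w\|_{L^2(\Omega)}$, hence $|(v_n,w)_{L^2(\Omega)}|\le n\,\|\nabla\zeta\|_{L^2(A_n)}\|w\|_{L^2(A_n)}\les n^{-1/2}\|w\|_{H^1(\Omega)}$, i.e.\ $\|v_n\|_{[H^{-1}]^2}\to0$. On the other hand, $\Pi v_n=\nabla\phi_n$ with $\phi_n\in H^1(\Omega)\cap L^2_0(\Omega)$ defined by $(\nabla\phi_n,\nabla\chi)_{L^2(\Omega)}=(v_n,\nabla\chi)_{L^2(\Omega)}$; testing with $\chi=\zeta$ and integrating by parts (using $\partial_n\zeta=0$) gives $(\phi_n,-\Delta\zeta)_{L^2(\Omega)}=(v_n,\nabla\zeta)_{L^2(\Omega)}=n\int_{A_n}|\nabla\zeta|^2\,dx\to\pi$, so $\|\Pi v_n\|_{[H^{-1}]^2}\ges\|\phi_n\|_{L^2(\Omega)}\ge(\phi_n,-\Delta\zeta)_{L^2(\Omega)}/\|\Delta\zeta\|_{L^2(\Omega)}$ stays bounded away from zero (the first inequality is your own Ne\v{c}as/Bogovskii step). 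The failure is structural, not a matter of elliptic regularity: $\|\cdot\|_{[H^{-1}]^2}$ is duality against \emph{no-slip} fields, whereas $\Pi$ pairs naturally against gradients of Neumann functions, whose normal trace vanishes but whose tangential trace does not, so they are inadmissible test fields; this is also why your ``extension of $\Pi$ to $[H^{-1}]^2$'' is not even well defined (the pairing $\langle v,\nabla\chi\rangle$ is meaningless for $v\in[H^{-1}]^2$, $\chi\in H^1(\Omega)$). The periodic case is misleading precisely because there is no boundary. Your ``hands-on variant'' stumbles on the same point: the divergence-free boundary-layer part of the representer $w$ is exactly what a scalar Poisson argument cannot control.

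This mismatch of boundary conditions is why the proof the paper refers to (Theorem~3.2 of Olshanskii--Peters--Reusken, cited right after the lemma) passes through the \emph{Stokes} operator rather than the Laplace operator. There, in essence, one takes the Riesz representer $w\in[H^1_0(\Omega)]^2$ of $p$ (so that $-\Delta w+\beta w=-\nabla p$ and $\|w\|_U=\|p\|_P$), solves an auxiliary Stokes problem with no-slip boundary conditions, force $f=\beta w\in[L^2(\Omega)]^2$ and divergence data $g=0$, and invokes \citecond{(R)} to obtain an $H^1$ pressure $\mu$ with $\|\mu\|_{H^1(\Omega)}\les\beta\|w\|_{L^2(\Omega)}\le\beta^{1/2}\|p\|_P$; the remainder $p-\mu$ is then the pressure of a Stokes problem with $L^2$ data and is bounded in $L^2(\Omega)$ by inf-sup stability. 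In other words, the divergence-free component that your projection $\Pi$ discards is absorbed by Stokes regularity, which---unlike Neumann--Laplace regularity---is formulated with respect to the same $[H^1_0(\Omega)]^2$ duality in which $\|p\|_P$ lives; this is also consistent with the paper's remark that only \citecond{(R)} with $f\in[L^2(\Omega)]^2$ and $g=0$ is needed. To repair your proof you would have to replace the Helmholtz projection by such a Stokes-based splitting; as written, the argument does not go through.
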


For a proof of this lemma, see Theorem~3.2 in~\cite{Olshanskii:Peters:Reusken:2006} (this
proof only needs \citecond{(R)} for $f\in[L^2(\Omega)]^2$ and $g=0$).

The discretization of problem~\eqref{eq:gen:bil} is done using
standard finite element techniques.
We assume to have for $k=0,1,2,\ldots$ a sequence of grids obtained
by uniform refinement. The grid size on grid level $k$ (length of the largest edge)
is denoted by $h_k$. On each
grid level~$k$, we discretize the problem using the Galerkin approach,
i.e., we have finite dimensional
spaces $X_k \subset X$ and consider the following problem. Find $x_k
\in X_k $ such that
\begin{equation}\label{eq:galerkin}
        \mcB(x_k,\tilde{x}_k) = \mcF(\tilde{x}_k) \qquad \mbox{for all
}\tilde{x}_k \in X_k.
\end{equation}
Following the concept of mixed finite elements, we choose $X_k:=U_k\times P_k$,
where $U_k\subset U$ and $P_k\subset P$ are obtained by a proper discretization, see below.
Using a nodal basis, we can represent this problem in matrix-vector
notation as follows:
\begin{equation}\label{eq:gen:bil:mv}
                \mcA_k\,\ul{x}_k = \ul{\bff}_k,
\end{equation}
where
\begin{equation*}
	\mcA_k = \left(\begin{array}{cc}A_k&B_k^T\\B_k&0\end{array}\right),\qquad 
	\ul{x}_k = \left(\begin{array}{c}\ul{u}_k\\\ul{p}_k\end{array}\right),\qquad 
	\ul{\bff}_k = \left(\begin{array}{c}\ul{f}_k\\\ul{g}_k\end{array}\right)
\end{equation*}
and the matrices $A_k$ and $B_k$ represent the scalar products
$a(u,\tilde{u}):=(\nabla u,\nabla\tilde{u})_{L^2(\Omega)} 
		+\beta ( u,\tilde{u})_{L^2(\Omega)}$ and 
$b(u,\tilde{p}):=(\nabla\cdot u,\tilde{p})_{L^2(\Omega)}$, respectively.

Here and in what follows, any underlined quantity, like~$\ul{x}_k$, is
the representation of the corresponding
non-underlined quantity, here~$x_k$, with respect to a nodal basis of
the corresponding Hilbert space, here~$X_k$.

The next step is to show the discrete stability condition, i.e., that
     \begin{equation}\tag{\textbf{A1a}}
               \|x_k\|_{X} \les \nnsup{\tilde{x}_k}{X_k}
                  \frac{\mcB(x_k,\tilde{x}_k)}{\|\tilde{x}_k\|_{X}}
               \les \|x_k\|_{X}
     \end{equation}
holds for all $x_k\in X_k$.

To guarantee the discrete stability condition, we have to choose a discretization which is stable for
the standard Stokes problem, particularly a discretization that satisfies the weak inf-sup
condition, i.e.,
\begin{equation}\label{eq:cond:s}
	\nnsup{u_k}{U_k} \frac{(\nabla\cdot u_k,p_k)_{L^2(\Omega)}}{\|u_k\|_{L^2(\Omega)} } 
		\ges \|\nabla p_k\|_{L^2(\Omega)}
\end{equation}
should hold for all $p_k\in P_k$. Note that this is a standard condition which guarantees that the
chosen discretization is stable for the Stokes problem.
In~\cite{Bercovier:Pironneau:1979,Verfuerth:1984} it was shown that
condition~\eqref{eq:cond:s} is satisfied for the Taylor-Hood element 
($P1-P2$-element) for polygonal
domains where at least one vertex of each element is located in the interior
of the domain. Here and in what follows we assume that the problem is
discretized with the Taylor-Hood element and that the mesh satisfies
the named condition.
Using the weak-inf sup condition~\eqref{eq:cond:s} we can show the following lemma.
\begin{lemma}
	If the problem is discretized using the Taylor-Hood element,
	condition~\citecond{(A1a)} is satisfied.
\end{lemma}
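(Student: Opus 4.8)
The plan is to treat the two inequalities in \citecond{(A1a)} separately. The right-hand (boundedness) inequality is inherited for free from the continuous statement \citecond{(A1)}: since $X_k\subset X$, every $\tilde{x}_k\in X_k$ is an admissible test function in \citecond{(A1)}, so $\mcB(x_k,\tilde{x}_k)\les\|x_k\|_X\|\tilde{x}_k\|_X$ for all $x_k,\tilde{x}_k\in X_k$, and taking the supremum over $X_k$ yields the upper bound. (Equivalently, boundedness of $\mcB$ follows directly from the Cauchy--Schwarz inequality and the definition of $\|\cdot\|_X$.) The entire difficulty therefore lies in the left-hand (inf--sup) inequality.

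For the lower bound I would follow the classical saddle-point recipe and reduce it to a discrete inf--sup condition for the divergence form in the parameter-dependent norms. Given $x_k=(u_k,p_k)$, the choice $\tilde{x}_k=(u_k,-p_k)$ already gives $\mcB(x_k,\tilde{x}_k)=a(u_k,u_k)$, which by Poincar\'e's inequality satisfies $a(u_k,u_k)\eqsim\|u_k\|_U^2$, so the velocity part of the norm is controlled robustly in $\beta$. To recover $\|p_k\|_P$ it suffices to produce a $w_k\in U_k$ with
\begin{equation*}
	(p_k,\nabla\cdot w_k)_{L^2(\Omega)}\ges\|p_k\|_P^2
	\quad\text{and}\quad
	\|w_k\|_U\les\|p_k\|_P;
\end{equation*}
testing with $\tilde{x}_k=(u_k,-p_k)+\theta(w_k,0)$ for a fixed, sufficiently small $\theta>0$ and absorbing the cross term $a(u_k,w_k)$ by Young's inequality then gives $\mcB(x_k,\tilde{x}_k)\ges\|x_k\|_X^2$ with $\|\tilde{x}_k\|_X\les\|x_k\|_X$, which is the claimed inf--sup inequality.

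The core of the proof is thus the discrete, parameter-robust inf--sup estimate
\begin{equation*}
	\nnsup{w_k}{U_k}\frac{(p_k,\nabla\cdot w_k)_{L^2(\Omega)}}{\|w_k\|_U}
	\ges\|p_k\|_P
	\qquad\text{for all }p_k\in P_k.
\end{equation*}
By the definition of $\|\cdot\|_P$ in~\eqref{eq:x:norm}, the continuous counterpart (the supremum over all of $U$) equals $\|p_k\|_P$, and by Lemma~\ref{lem:pe} this quantity is $\eqsim\|p_k\|_{L^2(\Omega)+\beta^{-1}H^1(\Omega)}$. The natural way to pass from the continuous to the discrete supremum with a $\beta$-independent constant is to construct a Fortin operator $\Pi_k\colon[H^1_0(\Omega)]^2\to U_k$ satisfying $(\nabla\cdot(w-\Pi_k w),p_k)_{L^2(\Omega)}=0$ for all $p_k\in P_k$ together with the uniform stability bound $\|\Pi_k w\|_U\les\|w\|_U$; then $(p_k,\nabla\cdot\Pi_k w)_{L^2(\Omega)}=(p_k,\nabla\cdot w)_{L^2(\Omega)}$ and the discrete supremum dominates the continuous one up to the stability constant.

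The main obstacle is precisely the $\beta$-robustness of this last step. Stability of the Taylor--Hood Fortin operator in the $H^1$-seminorm is classical, but the weighted norm $\|w\|_U^2=\|w\|_{H^1(\Omega)}^2+\beta\|w\|_{L^2(\Omega)}^2$ additionally requires $L^2$-stability, uniformly in $k$ and independently of $\beta$. This is where the weak inf--sup condition~\eqref{eq:cond:s} enters: it provides divergence control measured against the $L^2$-norm of the velocity, which is exactly what governs the large-$\beta$ regime, whereas the standard Taylor--Hood inf--sup governs the small-$\beta$ regime. I expect the argument to reduce to verifying these two endpoint estimates and combining them (equivalently, choosing the decomposition that realizes the sum-norm $\|p_k\|_{L^2(\Omega)+\beta^{-1}H^1(\Omega)}$ and testing with the correspondingly scaled velocity), the delicate point being that all constants remain independent of both $k$ and $\beta$.
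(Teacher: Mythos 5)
Your handling of the upper bound (inherited from \citecond{(A1)}) and your Brezzi-type reduction of the lower bound to a discrete, parameter-robust inf-sup estimate for the pressure are both correct, and you have isolated exactly the right crux. For comparison: the paper does not prove this crux from scratch at all; its proof consists of two citations to \cite{Olshanskii:2012}, namely estimate (2.16) there (discrete stability in the discrete norms, i.e., the output of your Brezzi reduction) and Lemma~2.2 there (precisely your key estimate, stated as the uniform equivalence of $\nnsup{w_k}{U_k}(p_k,\nabla\cdot w_k)_{L^2(\Omega)}/\|w_k\|_U$ with $\|p_k\|_P$ on $P_k$). So your outline reconstructs the right structure; the question is whether your proposed proof of the key estimate would go through.

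There is a genuine gap in the mechanism you propose for it. A Fortin operator with exact divergence orthogonality $(\nabla\cdot(w-\Pi_k w),p_k)_{L^2(\Omega)}=0$ \emph{and} uniform stability in $\|\cdot\|_U$ is not obtainable by standard constructions: the divergence-correcting part of such operators controls the $L^2$-error only through $h_k\|w\|_{H^1(\Omega)}$, so the $\beta$-weighted term degenerates like $\beta^{1/2}h_k\|w\|_{H^1(\Omega)}$, which is not bounded by $\|w\|_U$ precisely in the hard regime $\beta h_k^2 \gg 1$; no inverse inequality can repair this, because $w\in U$ is not a discrete function. (A Fortin operator for the Taylor--Hood element that is simultaneously $L^2$- and $H^1$-stable does exist --- such an operator was constructed by Mardal, Sch\"oberl and Winther --- but that construction is a substantial research result in itself, not a step one can simply ``verify''.) Moreover, the role you assign to the weak inf-sup condition~\eqref{eq:cond:s} (supplying $L^2$-stability of the Fortin operator) is not how it actually enters. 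In the argument behind Lemma~2.2 of \cite{Olshanskii:2012}, one uses a quasi-interpolant of Cl\'ement/Scott--Zhang type, which \emph{is} stable in both $L^2$ and $H^1$ but does \emph{not} preserve the divergence constraint; the resulting mismatch is rewritten by integration by parts as $-(w-\Pi_k w,\nabla p_k)_{L^2(\Omega)}$ --- legitimate because Taylor--Hood pressures lie in $H^1(\Omega)$ --- and the factor $\|\nabla p_k\|_{L^2(\Omega)}$ is then absorbed using~\eqref{eq:cond:s} together with an inverse inequality and the bound $\|w-\Pi_k w\|_{L^2(\Omega)}\les\min\{h_k\|w\|_{H^1(\Omega)},\|w\|_{L^2(\Omega)}\}$. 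Unless you either invoke such a mismatch-absorption argument or an $L^2$- and $H^1$-stable Fortin operator explicitly, your plan does not close as written.
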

\begin{proof}
	The estimate~(2.16) in~\cite{Olshanskii:2012} states that
  \begin{equation*}
     \|x_k\|_{X_{1,k}} \les \nnsup{\tilde{x}_k}{X_k}
                  \frac{\mcB(x_k,\tilde{x}_k)}{\|\tilde{x}_k\|_{X_{1,k}}},
 	\end{equation*}
 holds for all $x_k\in X_k$, where 
	\begin{equation}\label{eq:x1:norm}
	\begin{aligned}
        	\|x_k\|_{X_{1,k}}^2 &:=\|(u_k,p_k)\|_{X_{1,k}}^2 := \|u_k\|_{U_{1,k}}^2 + \|p_k\|_{P_{1,k}}^2,\\
      		\|u_k\|_{U_{1,k}}^2 &:=  \|u_k\|_{H^1(\Omega)}^2+\beta \|u_k\|_{L^2(\Omega)}^2\mbox{ and}\\
       		\|p_k\|_{P_{1,k}}^2 &:=  \nnsup{w_k}{U_k} \frac{(p_k,\nabla \cdot w_k)_{L^2(\Omega)}^2}{\|w_k\|_{H^1(\Omega)}^2+\beta \|w_k\|_{L^2(\Omega)}^2}.
	\end{aligned}
	\end{equation}
	Here, the underlying sets of the Hilbert spaces $X_{1,k}$, $U_{1,k}$ and $P_{1,k}$ coincide with those
	of $X_{k}$, $U_{k}$ and $P_{k}$, respectively. Observe $\|\cdot\|_{U_{1,k}}=\|\cdot\|_{U}$. Lemma~2.2 in~\cite{Olshanskii:2012} states 
	$\|\cdot\|_{P_{1,k}}\eqsim \|\cdot\|_{P}$. This shows~\citecond{(A1a)} as boundedness follows directly from~\citecond{(A1)}.
\end{proof}

Note that the grids are obtained by uniform refinement. So, the
discrete subsets are nested, i.e., $U_k \subseteq U_{k+1}$, 
$P_{k} \subseteq P_{k+1}$ and, consequently, $X_{k} \subseteq X_{k+1}$.

\section{A coupled multigrid method}\label{sec:3}

The problem~\eqref{eq:gen:bil:mv} shall be solved using a multigrid method.
Starting from an initial approximation~$\ul{x}^{(0)}_k$,
one iterate of the multigrid method is given by the following two steps:
\begin{itemize}
        \item \emph{Smoothing procedure:} Compute
              \begin{equation} \label{eq:sm:comp}
                   \ul{x}^{(0,m)}_k := \ul{x}^{(0,m-1)}_k + \hat{\mcA}_k^{-1}
                                    \left(\ul{\bff}_k -\mcA_k\;\ul{x}^{(0,m-1)}_k\right)
                                    \qquad \mbox{for } m=1,\ldots,\nu
              \end{equation}
        with $\ul{x}^{(0,0)}_k:=\ul{x}^{(0)}_k$. The choice of
        the smoother (or, in other words, of the preconditioning matrix
$\hat{\mcA}_k^{-1}$) will be discussed below. \vspace{.2cm}
        \item \emph{Coarse-grid correction:}
                \begin{itemize}
                     \item Compute the defect 
                        $\ul{\bff}_k -\mcA_k\;\ul{x}^{(0,\nu)}_k$
                        and restrict it to grid level $k-1$ using
                        an restriction matrix $I_k^{k-1}$:
                        \begin{equation}\nonumber
                              \ul{r}_{k-1}^{(1)} := I_k^{k-1} \left(\ul{\bff}_k -\mcA_k
                              \;\ul{x}^{(0,\nu)}_k\right).
                        \end{equation}
                     \item Determine the update $\ul{z}_{k-1}^{(1)}$ by solving the coarse-grid problem
                        \begin{equation}\label{eq:coarse:grid:problem}
                            \mcA_{k-1} \,\ul{z}_{k-1}^{(1)} =\ul{r}_{k-1}^{(1)}
                        \end{equation}
                        approximately.
                     \item Prolongate $\ul{z}_{k-1}^{(1)}$ to the
                          grid level $k$ using an prolongation 
                          matrix $I^k_{k-1}$ and add
                          the result to the previous iterate:
                          \begin{equation}\nonumber
                               \ul{x}_{k}^{(1)} := \ul{x}^{(0,\nu)}_k +
                                I_{k-1}^k \, \ul{z}_{k-1}^{(1)}.
                          \end{equation}
                \end{itemize}
\end{itemize}
As we have assumed to have nested spaces, the intergrid-transfer
matrices $I_{k-1}^k$ and $I_k^{k-1}$ are chosen in
a canonical way: $I_{k-1}^k $ is the canonical embedding
and the restriction $I_k^{k-1}$ is its transposed, i.e., $I_k^{k-1}=(I_{k-1}^k)^T$.

If the problem on the coarser grid is solved exactly (two-grid
method), the coarse-grid correction is given by
\begin{equation} \label{eq:method:cga}
        \ul{x}_k^{(1)} := \ul{x}_k^{(0,\nu)} +
        I_{k-1}^{k} \, \mcA_{k-1}^{-1} \,  I_{k}^{k-1}
        \left( \ul{\bff}_k - \mcA_k \;\ul{x}_k^{(0,\nu)}\right).
\end{equation}
In practice, the problem~\eqref{eq:coarse:grid:problem} is
approximately solved by applying one step (V-cycle)
or two steps (W-cycle) of the multigrid method, recursively. On
grid level $k=0$, the problem~\eqref{eq:coarse:grid:problem} is 
solved exactly.

To construct a multigrid convergence result based on Hackbusch's
splitting of the analysis into smoothing property and approximation
property, we have to introduce an appropriate framework.

Convergence is shown in the following $L^2$-like norms $\hnorm\cdot\hnorm_k$:
\begin{equation*}
        \hnorm x_k\hnorm_k^2 :=\|\ul{x}_k\|_{\mcL_k}^2
        :=(\mcL_k \ul{x}_k,\ul{x}_k)_{\ell^2},
\end{equation*}
where
\begin{equation}\label{eq:defMcL}
        \mcL_k:=
        \left(
                \begin{array}{cc}
                        (h_k^{-2}+\beta) M_{U,k} \\
                        &h_k^{-2}(\beta+h_k^{-2} )^{-1}M_{P,k}
                \end{array}
        \right),
\end{equation}
the matrices $M_{U,k}$ and $M_{P,k}$ are mass-matrices, representing the $L^2$-inner 
product in $U_k$ and $P_k$, respectively, and $(\cdot,\cdot)_{\ell^2}$ is the Euclidean
scalar product.

The smoothing property and the approximation property, which we will show below, read as follows.
\begin{itemize}
        \item \emph{Smoothing property:}
                \begin{equation} \label{eq:smp}
                         \sup_{\tilde{x}_k\in X_k} 
                         \frac{\mcB \left(x_k^{(0,\nu)}-x_k^*,
                                   \tilde{x}_k\right)}{\hnorm \tilde{x}_k\hnorm_k}
                         \le \eta(\nu) \hnorm x_k^{(0)}-x_k^*\hnorm_k
                \end{equation}
                holds for some function $\eta(\nu)$
                with $\lim_{\nu\rightarrow\infty}\eta(\nu)= 0$. Here and in what follows,
		$x_k^*\in X_k$ is the exact solution of the discretized problem~\eqref{eq:galerkin}.
        \item \emph{Approximation property:}
                \begin{equation} \label{eq:apprp}
                        \hnorm x_k^{(1)}-x_k^*\hnorm_k \le
                        C_A \sup_{\tilde{x}_k\in X_k} 
                        \frac{\mcB\left(
				x_k^{(0,\nu)}-x_k^*,\tilde{x}_k\right)}
				{\hnorm\tilde{x}_k\hnorm_k}
                \end{equation}        
                holds for some constant $C_A>0$.
\end{itemize}
If we combine both conditions, we obtain
\begin{equation}\nonumber
                        \hnorm x_k^{(1)}-x_k^*\hnorm_k \le
                        q(\nu) \hnorm x_k^{(0)}-x_k^*\hnorm_k,
\end{equation}
where $q(\nu)=C_A\eta(\nu)$. For $\nu$ large enough, we obtain $q(\nu)<1$,
i.e., the convergence of the two-grid method.
The convergence of the W-cycle multigrid method can be shown under
mild assumptions, see e.g.~\cite{Hackbusch:1985}.

\begin{remark}\label{rem:on:norms1}
	The norm $\hnorm\cdot\hnorm_{k}$ is the discrete analog of 
	$X_{0,k}:=U_{0,k}\times P_{0,k}:= U_k\times P_k$, equipped with norms
	\begin{equation}\label{eq:xzero:norm}
	\begin{aligned}
 	       \|x_k\|_{X_{0,k}}^2  &:=\|(u_k,p_k)\|_{X_{0,k}}^2 := \|u_k\|_{U_{0,k}}^2 + \|p_k\|_{P_{0,k}}^2,\\
	      		\|u_k\|_{U_{0,k}}^2 &:=  h_k^{-2} \|u_k\|_{L^2(\Omega)}^2+\beta \|u_k\|_{L^2(\Omega)}^2\mbox{ and}\\
		       	\|p_k\|_{P_{0,k}}^2 &:=  \nnsup{w_k}{L^2(\Omega)} \frac{h_k^{-2} (p_k,w_k)_{L^2(\Omega)}^2}{h_k^{-2} \|w_k\|_{L^2(\Omega)}^2
				+\beta \|w_k\|_{L^2(\Omega)}^2}\\
				&=  h_k^{-2}  (h_k^{-2} +\beta)^{-1} \|p_k\|_{L^2(\Omega)}^2.
	\end{aligned}
	\end{equation}
	Note that this norm is obtained from the norm $\|\cdot\|_X$ by ``replacing all differentials by $h_k^{-1}$''. This is a common construction
	principle, cf.~\cite{Hackbusch:1985}.
	However, in~\cite{Olshanskii:2012} the norm
	\begin{equation}\label{eq:olsh:norm}
	\begin{aligned}
 	       \|x_k\|_{\tilde{X}_{0,k}}^2  &:=\|(u_k,p_k)\|_{\tilde{X}_{0,k}}^2 := \|u_k\|_{U_{0,k}}^2 + \|p_k\|_{P_{1,k}}^2,\\
	      		\|u_k\|_{U_{0,k}}^2 &:=  h_k^{-2} \|u_k\|_{L^2(\Omega)}^2+\beta \|u_k\|_{L^2(\Omega)}^2\mbox{ and}\\
       			\|p_k\|_{P_{1,k}}^2 &:=  \nnsup{w_k}{U_k} \frac{(p_k,\nabla \cdot
				w_k)_{L^2(\Omega)}^2}{\|w_k\|_{H^1(\Omega)}^2+\beta \|w_k\|_{L^2(\Omega)}^2}.
	\end{aligned}
	\end{equation}
	was used. Note that the norm for the velocity field coincides with our choice but
	the norm for the pressure distribution is still the original norm, as introduced in~\eqref{eq:x1:norm}.
\end{remark}

\section{Smoother and proof of the smoothing property}\label{sec:3aa}

The choice of an appropriate smoother is a key issue in constructing
a coupled multigrid method for an indefinite problem.
In this paper, we introduce two kinds of smoothers. The first smoother
is appropriate for a large class of problems including the model problem:
the \emph{normal equation smoother}, cf.~\cite{Brenner:1996}, which reads
as follows.
\begin{equation}\nonumber
        \ul{x}^{(0,m)}_k := \ul{x}^{(0,m-1)}_k + \tau
                         \underbrace{\mcL_k^{-1} \mcA_k \mcL_k^{-1}}_{\displaystyle
\hat{\mcA}_k^{-1}:=}
                        \left(\ul{\bff}_k -\mcA_k \;\ul{x}^{(0,m-1)}_k\right)
                \qquad \mbox{for } m=1,\ldots,\nu.
\end{equation}
Here, a fixed~$\tau>0$ has to be chosen such that the
spectral radius~$\rho(\tau \hat{\mcA}_k^{-1}\mcA_k)$ is bounded away
from~$2$ on all grid levels~$k$ and for all choices of the parameter~$\beta$.

\begin{remark}\label{rem:on:smoothers1}
	In Section~6.1 in~\cite{Olshanskii:2012} a normal equation smoother has been proposed. (There,
	this kind of smoothers was called \emph{distributive smoother}.) Note that this class of smoothers
	depends on the choice of the Hilbert space norm. In the present paper, a scaled $L^2$-norm is used. Therefore, the Riesz
	isomorphism $\mathcal{L}_k$ can be easily inverted. However, in~\cite{Olshanskii:2012}
	a different norm was chosen, cf. Remark~\ref{rem:on:norms1}. For that norm, the realization
	of the inverse of the Riesz isomorphism involves the solution of one Poisson problem (for the pressure variable). So,
	two such problems have to be solved (approximately) in each smoothing step, cf. Section~8 in~\cite{Olshanskii:2012}.
\end{remark}

Using a standard inverse inequality, one can show that
\begin{equation*}
	\tag{\textbf{A2}} \|x_k\|_{X} \les \hnorm x_k\hnorm_k 
\end{equation*}
is satisfied for all $x_k \in X_k$. Based on this result, using an eigenvalue 
analysis one can show the following lemma, cf.~\cite{Brenner:1996}.
\begin{lemma}
        The damping parameter $\tau>0$ can be chosen independently
        of the grid level~$k$ and the choice of the parameter~$\beta$ such that
        \begin{equation*}
                \tau\, \rho(\hat{\mcA}_k^{-1}\mcA_k) \le 2-\epsilon < 2,
        \end{equation*}
        holds for some constant $\epsilon>0$. For this choice of~$\tau$, there is a
        constant~$C_S>0$, independent of the grid level~$k$ and the choice of the
	parameter~$\beta$, such that the smoothing property~\eqref{eq:smp} 
	is satisfied with rate
        $
                \eta(\nu):=C_S \nu^{-1/2}.
        $
\end{lemma}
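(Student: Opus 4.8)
The plan is to reduce everything to a scalar extremal problem for the symmetric matrix $S_k := \mcL_k^{-1/2}\mcA_k\mcL_k^{-1/2}$, where $\mcL_k^{1/2}$ denotes the symmetric positive definite square root of the (block-diagonal, hence SPD) matrix $\mcL_k$. Since $\mcA_k$ is symmetric and $\mcL_k$ is SPD, $S_k$ is symmetric, and $\hat{\mcA}_k^{-1}\mcA_k=\mcL_k^{-1}\mcA_k\mcL_k^{-1}\mcA_k$ is similar to $S_k^2$ via $\mcL_k^{1/2}$; hence $\rho(\hat{\mcA}_k^{-1}\mcA_k)=\rho(S_k)^2=\|S_k\|^2$. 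First I would establish that $\|S_k\|$ is bounded by a constant independent of $k$ and $\beta$: for $x_k\in X_k$ one has $(S_k\,\mcL_k^{1/2}\ul{x}_k,\mcL_k^{1/2}\ul{x}_k)_{\ell^2}=\mcB(x_k,x_k)$, so $\|S_k\|=\sup_{x_k}|\mcB(x_k,x_k)|/\hnorm x_k\hnorm_k^2$, and boundedness of $\mcB$ in $\|\cdot\|_X$ (the upper bound in \citecond{(A1)}) combined with the inverse inequality \citecond{(A2)} yields $|\mcB(x_k,x_k)|\les\|x_k\|_X^2\les\hnorm x_k\hnorm_k^2$. Choosing $\tau:=(2-\epsilon)/C^2$, with $C$ the $k,\beta$-independent bound on $\|S_k\|$, then gives the first claim $\tau\,\rho(\hat{\mcA}_k^{-1}\mcA_k)\le 2-\epsilon<2$.

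Next I would rewrite the left-hand side of the smoothing property~\eqref{eq:smp} as a Euclidean dual norm. Writing $\ul{e}^{(m)}:=\ul{x}_k^{(0,m)}-\ul{x}_k^*$ and using $\ul{\bff}_k=\mcA_k\ul{x}_k^*$, the smoother~\eqref{eq:sm:comp} propagates the error as $\ul{e}^{(m)}=(I-\tau\mcL_k^{-1}\mcA_k\mcL_k^{-1}\mcA_k)\ul{e}^{(m-1)}$, so in the variable $\ul{y}^{(m)}:=\mcL_k^{1/2}\ul{e}^{(m)}$ we obtain $\ul{y}^{(\nu)}=(I-\tau S_k^2)^\nu\ul{y}^{(0)}$. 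Since $\mcB(x_k,\tilde{x}_k)=(\mcA_k\ul{x}_k,\ul{\tilde{x}}_k)_{\ell^2}$ and $\hnorm\tilde{x}_k\hnorm_k=\|\mcL_k^{1/2}\ul{\tilde{x}}_k\|_{\ell^2}$, the substitution $\ul{v}=\mcL_k^{1/2}\ul{\tilde{x}}_k$ identifies the supremum with
\[
\sup_{\tilde{x}_k\in X_k}\frac{\mcB(x_k^{(0,\nu)}-x_k^*,\tilde{x}_k)}{\hnorm\tilde{x}_k\hnorm_k}=\|\mcL_k^{-1/2}\mcA_k\ul{e}^{(\nu)}\|_{\ell^2}=\|S_k(I-\tau S_k^2)^\nu\ul{y}^{(0)}\|_{\ell^2}.
\]
As $\hnorm x_k^{(0)}-x_k^*\hnorm_k=\|\ul{y}^{(0)}\|_{\ell^2}$, it remains to bound the operator norm $\|S_k(I-\tau S_k^2)^\nu\|$ by $\eta(\nu)$.

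Finally, since $S_k$ is symmetric I would diagonalize and reduce to a scalar maximum over the eigenvalues $\lambda$ of $S_k$, namely $\|S_k(I-\tau S_k^2)^\nu\|=\max_\lambda|\lambda|\,(1-\tau\lambda^2)^\nu\le\tau^{-1/2}\max_{t\in[0,2-\epsilon]}\sqrt{t}\,|1-t|^\nu$, via the substitution $t=\tau\lambda^2\in[0,2-\epsilon]$ (admissible by the spectral bound from the first step). For $t\in[0,1]$ an elementary calculus argument (the function $t(1-t)^{2\nu}$ is maximized at $t=(2\nu+1)^{-1}$) gives $\sqrt{t}\,(1-t)^\nu\le(2\nu+1)^{-1/2}\les\nu^{-1/2}$; for $t\in[1,2-\epsilon]$ one has $|1-t|\le1-\epsilon<1$, so $\sqrt{t}\,|1-t|^\nu\le\sqrt{2}\,(1-\epsilon)^\nu$ decays exponentially and is bounded by $\nu^{-1/2}$ up to a constant. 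This yields $\eta(\nu)=C_S\,\nu^{-1/2}$ with $C_S$ depending only on $\tau$ and $\epsilon$, hence independent of $k$ and $\beta$. The only genuinely problem-specific step is the uniform bound $\|S_k\|\le C$; everything else is the standard normal-equation smoothing estimate of~\cite{Brenner:1996}. I expect the main technical care to lie in handling the indefiniteness of $\mcA_k$, which is precisely what squaring to $S_k^2$ neutralizes, so that $|1-\tau\lambda^2|\le1$ holds across the whole sign-indefinite spectrum of $S_k$; the slower rate $\nu^{-1/2}$ (rather than $\nu^{-1}$) is inherent, since the single factor $S_k$ in the residual forces the extra $\sqrt{t}$.
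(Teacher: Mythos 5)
Your proof is correct and is essentially the argument the paper has in mind: the paper does not spell out a proof but defers to the eigenvalue analysis of~\cite{Brenner:1996} based on the inverse inequality \citecond{(A2)}, and your reduction to $S_k=\mcL_k^{-1/2}\mcA_k\mcL_k^{-1/2}$, the uniform bound $\|S_k\|\les 1$ via \citecond{(A1)} and \citecond{(A2)}, and the scalar estimate $\max_t \sqrt{t}\,|1-t|^\nu\les\nu^{-1/2}$ is precisely that standard argument, carried out completely and with the constants tracked so that independence of $k$ and $\beta$ is visible.
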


Certainly, the smoothing procedure~\eqref{eq:smp} should be efficient-to-apply.
Using the fact, that the mass matrices $M_{U,k}$ and $M_{P,k}$ 
in~\eqref{eq:defMcL} and their diagonals are
spectrally equivalent under weak assumptions, for the practical
realization of the normal equation smoother these mass matrices can be
replaced by their diagonals.

The second smoother, which we propose, is a \emph{Uzawa type smoother}, 
cf.~\cite{Schoeberl:Zulehner:2002}. Here, one step of the smoother to
compute $\ul{x}^{(0,m)}_k=(\ul{u}^{(0,m)}_k,\ul{p}^{(0,m)}_k)$ based 
on $\ul{x}^{(0,m-1)}_k=(\ul{u}^{(0,m-1)}_k,\ul{p}^{(0,m-1)}_k)$ reads as follows:
\begin{align*}
	\ul{u}^{(0,m-1/2)}_k & := \ul{u}^{(0,m-1)}_k
		+ \tau \hat{A}_k^{-1} \left( \ul{f}_{k} - A_k \ul{u}^{(0,m-1)}_k - B_k^T \ul{p}^{(0,m-1)}_k\right)\\
	\ul{p}^{(0,m)}_k & := \ul{p}^{(0,m-1)}_k
		- \sigma \hat{S}_k^{-1} \left( \ul{g}_{k} - B_k \ul{u}^{(0,m-1/2)}_k \right)\\
	\ul{u}^{(0,m)}_k & := \ul{u}^{(0,m-1)}_k
		+ \tau \hat{A}_k^{-1} \left( \ul{f}_{k} - A_k \ul{u}^{(0,m-1)}_k - B_k^T \ul{p}^{(0,m)}_k\right),
\end{align*}
where $\hat{A}_k$ and $\hat{S}_k$ are the (1,1)-block and the (2,2)-block of $\mcL_k$, respectively. The
damping parameters $\tau$ and $\sigma$ have to be chosen independently of the choice of $\beta$
and of the grid level $k$ such that~\eqref{eq:def:tau:sigma} holds. 
The smoother can be rewritten in the compact notation~\eqref{eq:sm:comp}, where
\begin{equation*}
	\hat{\mcA}_k := \left(
		\begin{array}{cc}
			\tau^{-1} \hat{A}_k & B_k^T \\
			B_k & \tau B_k \hat{A}_k^{-1} B^T - \sigma^{-1} \hat{S}_k
		\end{array}
	\right).
\end{equation*}
The smoothing property can be shown using the theory introduced in~\cite{Schoeberl:Zulehner:2002}.
\begin{lemma}
	Let~$\hat{A}_k$ and $\hat{S}_k$ be the (1,1)-block and the
		(2,2)-block of $\mcL_k$, respectively. Then $\tau>0$ and $\sigma>0$ 
	can be chosen independently of the grid level and the choice of the parameter
	$\beta$ such that
	\begin{equation}\label{eq:def:tau:sigma}
		\tau^{-1} \hat{A}_k \ge A_k
		\qquad \mbox{and} \qquad 
		\sigma^{-1} \hat{S}_k \ge \tau B_k \hat{A}_k^{-1} B_k^T.
	\end{equation}

	For this choice of~$\tau$ and~$\sigma$, there is a
        constant~$C_S>0$, independent of the grid level~$k$ and the choice of the
	parameter~$\beta$, such that the smoothing property~\eqref{eq:smp} 
	is satisfied with rate
        $
                \eta(\nu):=C_S \nu^{-1/2}.
        $
\end{lemma}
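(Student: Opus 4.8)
The plan is to prove the statement in two stages: first I would show that the operator inequalities~\eqref{eq:def:tau:sigma} are attainable with $\tau$ and $\sigma$ independent of $k$ and $\beta$, and then I would invoke the abstract smoothing-property result of~\cite{Schoeberl:Zulehner:2002}, whose hypotheses are exactly~\eqref{eq:def:tau:sigma}. The decisive structural fact driving everything is that the blocks of $\mcL_k$, namely $\hat{A}_k=(h_k^{-2}+\beta)M_{U,k}$ and $\hat{S}_k=h_k^{-2}(\beta+h_k^{-2})^{-1}M_{P,k}$, are scaled precisely so that the $h_k$- and $\beta$-dependent factors cancel against the growth of $A_k$ and of $B_k\hat{A}_k^{-1}B_k^T$.

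For the first inequality $\tau^{-1}\hat{A}_k\ge A_k$, I would recall that $A_k$ represents $a(u_k,u_k)=\|\nabla u_k\|_{L^2(\Omega)}^2+\beta\|u_k\|_{L^2(\Omega)}^2$, while $\hat{A}_k$ represents $(h_k^{-2}+\beta)\|u_k\|_{L^2(\Omega)}^2$. The standard inverse inequality $\|\nabla u_k\|_{L^2(\Omega)}\les h_k^{-1}\|u_k\|_{L^2(\Omega)}$ then gives $a(u_k,u_k)\les(h_k^{-2}+\beta)\|u_k\|_{L^2(\Omega)}^2$ with a constant $C'$ independent of $k$ and $\beta$, i.e.\ $A_k\le C'\hat{A}_k$; I then fix $\tau^{-1}:=C'$. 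For the second inequality, I would first use $\hat{A}_k^{-1}=(h_k^{-2}+\beta)^{-1}M_{U,k}^{-1}$ to write $(B_k\hat{A}_k^{-1}B_k^T\ul{p}_k,\ul{p}_k)_{\ell^2}=(h_k^{-2}+\beta)^{-1}\nnsup{w_k}{U_k}\frac{(\nabla\cdot w_k,p_k)_{L^2(\Omega)}^2}{\|w_k\|_{L^2(\Omega)}^2}$. Integration by parts together with the Cauchy--Schwarz inequality bounds the supremum by $\|\nabla p_k\|_{L^2(\Omega)}^2$, and the inverse inequality for the piecewise linear pressure space gives $\|\nabla p_k\|_{L^2(\Omega)}^2\les h_k^{-2}\|p_k\|_{L^2(\Omega)}^2$. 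Hence $\tau B_k\hat{A}_k^{-1}B_k^T\le \tau C''(h_k^{-2}+\beta)^{-1}h_k^{-2}M_{P,k}=\tau C''\hat{S}_k$, and setting $\sigma^{-1}:=\tau C''$ delivers the second inequality, again uniformly in $k$ and $\beta$.

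For the smoothing property I would exploit the block factorization of the compact smoother. Writing $\hat{A}:=\tau^{-1}\hat{A}_k$ and $\hat{S}:=\sigma^{-1}\hat{S}_k$, so that $\tau B_k\hat{A}_k^{-1}B_k^T=B_k\hat{A}^{-1}B_k^T$, the matrix $\hat{\mcA}_k$ factors as the congruence $\hat{\mcA}_k=L\,\mathrm{diag}(\hat{A},-\hat{S})\,L^T$ with $L$ unit lower triangular. Thus the symmetric positive definite operator $\mathrm{diag}(\hat{A},\hat{S})=\mathrm{diag}(\tau^{-1}\hat{A}_k,\sigma^{-1}\hat{S}_k)$ is spectrally equivalent to $\mcL_k$ with constants depending only on $\tau$ and $\sigma$, so the norm in which~\cite{Schoeberl:Zulehner:2002} proves the smoothing property coincides, up to such uniform constants, with $\hnorm\cdot\hnorm_k$. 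Since~\eqref{eq:def:tau:sigma} are precisely the assumptions $A_k\le\hat{A}$ and $B_k\hat{A}^{-1}B_k^T\le\hat{S}$ required there, the main theorem of that reference yields~\eqref{eq:smp} with $\eta(\nu)=C_S\nu^{-1/2}$ and $C_S$ independent of $k$ and $\beta$.

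I expect the main obstacle to be the uniform verification of the second inequality in~\eqref{eq:def:tau:sigma}: it is where the $\beta$- and $h_k$-dependent scaling baked into $\hat{S}_k$ must be matched exactly against the growth of $B_k\hat{A}_k^{-1}B_k^T$, and where one must confirm that the factor $h_k^{-2}(h_k^{-2}+\beta)^{-1}$ cancels so that $\sigma$ is genuinely parameter-free. Once this is secured, the smoothing estimate itself reduces to a black-box application of the abstract Uzawa-smoother theory.
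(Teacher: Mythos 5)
Your verification of~\eqref{eq:def:tau:sigma} is correct and considerably more detailed than the paper, which settles this part with the single remark that it ``follows from standard inverse inequalities''; in particular your treatment of the Schur-complement bound (integration by parts, Cauchy--Schwarz, then the inverse inequality for the piecewise linear pressure space) is exactly the right way to see that the factor $h_k^{-2}(h_k^{-2}+\beta)^{-1}$ built into $\hat{S}_k$ matches the growth of $B_k\hat{A}_k^{-1}B_k^T$ uniformly in $k$ and $\beta$.

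The gap is in the final, black-box step. The paper also invokes Theorem~4 of~\cite{Schoeberl:Zulehner:2002}, but it applies it to the transformed matrices $\mathcal{K}_k:=\mcL_k^{-1/2}\mcA_k\mcL_k^{-1/2}$ and $\hat{\mathcal{K}}_k:=\mcL_k^{-1/2}\hat{\mcA}_k\mcL_k^{-1/2}$, and what that theorem delivers is a \emph{relative} bound,
\begin{equation*}
	\|\mcL_k^{-1/2}\mcA_k(I-\hat{\mcA}_k^{-1}\mcA_k)^{\nu}\mcL_k^{-1/2}\|_{\ell^2}
		\le \eta(\nu)\,\|\mcL_k^{-1/2}\mcA_k\mcL_k^{-1/2}\|_{\ell^2},
\end{equation*}
not the absolute estimate~\eqref{eq:smp}. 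To obtain~\eqref{eq:smp} with $C_S$ independent of $k$ and $\beta$, one must additionally show $\|\mcL_k^{-1/2}\mcA_k\mcL_k^{-1/2}\|_{\ell^2}\les 1$ uniformly; the paper does this by combining the boundedness part of~\citecond{(A1)} with the inverse inequality~\citecond{(A2)}, i.e., $\ul{x}_k^T\mcA_k\ul{\tilde{x}}_k=\mcB(x_k,\tilde{x}_k)\les\|x_k\|_X\|\tilde{x}_k\|_X\les\hnorm x_k\hnorm_k\,\hnorm\tilde{x}_k\hnorm_k$. Your proposal never establishes this uniform boundedness of $\mcB$ with respect to $\hnorm\cdot\hnorm_k$: the spectral equivalence $\mathrm{diag}(\tau^{-1}\hat{A}_k,\sigma^{-1}\hat{S}_k)\eqsim\mcL_k$ only matches the \emph{preconditioner} norm with $\hnorm\cdot\hnorm_k$; it says nothing about the size of the \emph{system matrix} $\mcA_k$ in that scale, which is precisely where the $k$- and $\beta$-uniformity of $C_S$ could still be lost. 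The omission is reparable with material you already have: the two inequalities in~\eqref{eq:def:tau:sigma} give $\hat{A}_k^{-1/2}A_k\hat{A}_k^{-1/2}\le\tau^{-1}I$ and $\|\hat{S}_k^{-1/2}B_k\hat{A}_k^{-1/2}\|_{\ell^2}^2\le(\tau\sigma)^{-1}$, so $\|\mcL_k^{-1/2}\mcA_k\mcL_k^{-1/2}\|_{\ell^2}$ is bounded by a constant depending only on $\tau$ and $\sigma$. But as written this step is missing, and without it the claimed uniformity of $C_S$ is not justified.
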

\begin{proof}
	The fact that $\tau>0$ and $\sigma>0$ can be chosen independently
	of the grid level and the choice of the parameter $\beta$ follows
	from standard inverse inequalities.

	For the smoothing property, we apply Theorem~4 in~\cite{Schoeberl:Zulehner:2002}
	with the choice $\mathcal{K}_k:= \mcL_k^{-1/2} \mcA_k \mcL_k^{-1/2}$ and
	$\hat{\mathcal{K}}_k:= \mcL_k^{-1/2} \hat{\mcA}_k \mcL_k^{-1/2}$. This immediately
	implies
	\begin{equation*}
		\|\mcL_k^{-1/2}\mcA_k (I-\hat{\mcA}_k\mcA_k)^{\nu}\mcL_k^{-1/2}\|_{\ell^2}
			\le \eta(\nu) \|\mcL_k^{-1/2}\mcA_k\mcL_k^{-1/2}\|_{\ell^2}.
	\end{equation*}
	As $\ul{x}_k^T\mcA_k\ul{\tilde{x}}_k =
		\mcB(x_k,\tilde{x}_k) \les \|x_k\|_X \|\tilde{x}_k\|_X
		\les \hnorm x_k\hnorm_{k} \hnorm\tilde{x}_k\hnorm_{k}
		= \|\ul{x}_k\|_{\mcL_k} \|\ul{\tilde{x}}_k\|_{\mcL_k}$
	holds for all $x_k, \tilde{x}_k\in X_k$, we obtain
	$\|\mcL_k^{-1/2}\mcA_k\mcL_k^{-1/2}\|_{\ell^2}\les 1$, which finishes the proof.
\end{proof}

\begin{remark}\label{rem:on:smoothers2}
	Uzawa type smoothers have also been considered in Section 6.2 in~\cite{Olshanskii:2012}. As
	for the case of normal equation smoothers (distributive smoothers) due to the particular norm
	that was chosen in~\cite{Olshanskii:2012}, again a discrete Poisson problem for the pressure variable has to
	be solved in each smoothing step, cf. Section~8 in~\cite{Olshanskii:2012}.
\end{remark}

\section{Proof of the approximation property}\label{sec:3a}

The proof of the approximation property~\eqref{eq:apprp} is done using the
approximation theorem introduced in~\cite{Takacs:Zulehner:2012}
which requires besides the conditions~\citecond{(A1)}
and~\citecond{(A1a)} two more conditions (conditions~\citecond{(A3)} and 
\citecond{(A4)}) involving, besides the Hilbert space $X$, two more Hilbert
spaces $X_{-,k}:=(X_-,{\|\cdot\|_{X_{-,k}}})$ and
 $X_{+,k}:=(X_+,{\|\cdot\|_{X_{+,k}}})$, which are chosen as follows.

As weaker space we choose $X_-:= U_-\times P_-$, where
$U_-:=[L^2(\Omega)]^2$ and $P_-:= [H^1_0(\Omega)\cap L^2_0(\Omega)]^*$.
These Hilbert spaces are equipped with norms
\begin{align*}
        \|x\|_{X_{-,k}}^2 & := \|(u,p)\|_{X_{-,k}}^2 := \|u\|_{U_{-,k}}^2 +\|p\|_{P_{-,k}}^2,\\
        \|u\|_{U_{-,k}}^2 &:=h_k^{-2} \|u\|_{L^2(\Omega)\cap \beta [H^1_0(\Omega)]^*}^2 \mbox{ and}\\
        \|p\|_{P_{-,k}}^2 &:=h_k^{-2} \|p\|_{[H^1_0(\Omega)]^* + \beta^{-1} L^2_0(\Omega)}^2.
\end{align*}

\begin{remark}
	The idea behind the construction of the norm $\|\cdot\|_{X_{-,k}}$ 
	is is to take the norm $\|\cdot\|_X$ and ``replace all occurrences
	of $H^1$ by $h_k^{-1} L^2$ and all occurrences of $L^2$ by $h_k^{-1} H^{-1}$''. 
	This is different to the idea of constructing the norm $\hnorm\cdot\hnorm_{k}$, where
	 ``only the $H^1$-terms are replaced by $h_k^{-1}L^2$ and the $L^2$-terms are kept unchanged''. 
\end{remark}

In the following, we show the approximation property in the norm $\|\cdot\|_{X_{-,k}}$,
i.e.,
\begin{equation}\label{eq:apprp:raute}
                       \| x_k^{(1)}-x_k^*\|_{X_{-,k}} \le
                       \tilde{C}_A \sup_{\tilde{x}_k\in X_k} 
                       \frac{\mcB\left(x_k^{(0,\nu)}-x_k^*,\tilde{x}_k\right)}{\|\tilde{x}_k\|_{X_{-,k}}}.
\end{equation}
We will show below that this version of the approximation property
is \emph{stronger} than the required estimate \eqref{eq:apprp}, cf. Lemma~\ref{lem:equiv}.
In classical approaches, cf.~\cite{Takacs:Zulehner:2012,Olshanskii:Peters:Reusken:2006},
the approximation property is directly shown in the norm $\hnorm\cdot\hnorm_{k}$.

Note that dual spaces are $(X_-)^*:= (U_-)^*\times (P_-)^*$, where
$(U_-)^*=[L^2(\Omega)]^2$ and $(P_-)^* = H^1_0(\Omega)\cap L^2_0(\Omega)$, equipped with norms
\begin{align*}
        \|\mcF \|_{(X_{-,k})^*}^2 & = \|(f,g)\|_{(X_{-,k})^*}^2 := \|f\|_{(U_{-,k})^*}^2 +\|g\|_{(P_{-,k})^*}^2,\\
        \|f\|_{(U_{-,k})^*}^2 &=h_k^{2} \|f\|_{L^2(\Omega)+\beta^{-1} H^1_0(\Omega)}^2  \mbox{ and}\\
        \|g\|_{(P_{-,k})^*}^2 &=h_k^{2} \|g\|_{H^1_0(\Omega)\cap \beta L^2_0(\Omega)}^2.
\end{align*}

As stronger space we choose
$X_{+}:=U_{+}\times P_{+}$, where
$U_{+}:=[H^2(\Omega)\cap H^1_0(\Omega)]^2$ and
$P_{+}:=H^1(\Omega)\cap L^2_0(\Omega)$, equipped with norms
\begin{align*}
        \|x\|_{X_{+,k}}^2 &:=\|(u,p)\|_{X_{+,k}}^2 := \|u\|_{U_{+,k}}^2+ \|p\|_{P_{+,k}}^2,\\
        \|u\|_{U_{+,k}}^2 &:= h_k^2 \|u\|_{H^2(\Omega)\cap \beta H^1(\Omega)}^2
	\mbox{ and}\\
        \|p\|_{P_{+,k}}^2 &:= h_k^2 \|p\|_{H^1(\Omega) + \beta^{-1} H^2(\Omega)}^2.
\end{align*}
For showing the approximation property, we need some auxiliary results.
The first result is a standard approximation error estimate:
\begin{theorem}\label{thrm:a3}
	On all grid levels $k$, the approximation error estimate
        \begin{equation}\tag{\textbf{A3}}
                \inf_{x_k\in X_k} \|x-x_k\|_X \les \| x \|_{X_{+,k}}\qquad \mbox{for all } x\in X_+
        \end{equation}
        is satisfied.
\end{theorem}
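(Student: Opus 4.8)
The plan is to exploit the product structure of the spaces. Since $X_k = U_k\times P_k$ and both $\|\cdot\|_X$ and $\|\cdot\|_{X_{+,k}}$ split additively into a velocity and a pressure contribution (cf.~\eqref{eq:x:norm} and the definition of $\|\cdot\|_{X_{+,k}}$), the best-approximation infimum decouples,
\begin{equation*}
\inf_{x_k\in X_k}\|x-x_k\|_X^2 = \inf_{u_k\in U_k}\|u-u_k\|_U^2 + \inf_{p_k\in P_k}\|p-p_k\|_P^2 .
\end{equation*}
Hence it suffices to prove the two separate estimates $\inf_{u_k\in U_k}\|u-u_k\|_U\les\|u\|_{U_{+,k}}$ for all $u\in U_+$ and $\inf_{p_k\in P_k}\|p-p_k\|_P\les\|p\|_{P_{+,k}}$ for all $p\in P_+$. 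For both I would construct an explicit interpolation operator into the discrete space and invoke standard finite element approximation theory. The meshes are quasi-uniform since they arise from uniform refinement, so all the approximation and stability estimates used below hold with constants independent of the grid level $k$ and (trivially) of $\beta$.

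For the velocity I would choose a quasi-interpolation operator $\Pi_k\colon U_+\to U_k$ that reproduces the homogeneous Dirichlet boundary conditions (e.g.\ a Scott--Zhang operator), so that $\Pi_k u\in U_k\subset[H^1_0(\Omega)]^2$. The standard local estimates for the Taylor--Hood velocity space give $\|u-\Pi_k u\|_{H^1(\Omega)}\les h_k\|u\|_{H^2(\Omega)}$ and $\|u-\Pi_k u\|_{L^2(\Omega)}\les h_k\|u\|_{H^1(\Omega)}$. Inserting these into the definition of $\|\cdot\|_U$ yields
\begin{equation*}
\|u-\Pi_k u\|_U^2 = \|u-\Pi_k u\|_{H^1(\Omega)}^2 + \beta\|u-\Pi_k u\|_{L^2(\Omega)}^2 \les h_k^2\|u\|_{H^2(\Omega)}^2 + \beta h_k^2\|u\|_{H^1(\Omega)}^2 = \|u\|_{U_{+,k}}^2,
\end{equation*}
which is exactly the velocity estimate.

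The pressure estimate is the main obstacle, because $\|p\|_P\eqsim\|p\|_{L^2(\Omega)+\beta^{-1}H^1(\Omega)}$ (Lemma~\ref{lem:pe}) and $\|p\|_{P_{+,k}}$ are both \emph{sum}-space norms, and the discrete space is constrained to have mean value zero, $P_k\subset L^2_0(\Omega)$. A nodal interpolant would destroy the mean-value constraint, so instead I would use the $L^2(\Omega)$-orthogonal projection $Q_k$ onto the (unconstrained) continuous piecewise linear space. Since constants belong to that space, $\int_\Omega Q_k p = \int_\Omega p$, so $Q_k$ preserves zero mean; thus for $p\in P_+$ the image $Q_k p$ is continuous, piecewise linear and has zero mean, i.e.\ $Q_k p\in P_k$. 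Given $p\in P_+$, I would pick a decomposition $p = q+r$ with $q\in H^1(\Omega)$, $r\in H^2(\Omega)$ that realizes the sum-norm up to a fixed factor, $\|q\|_{H^1(\Omega)}^2+\beta^{-1}\|r\|_{H^2(\Omega)}^2\les\|p\|_{H^1(\Omega)+\beta^{-1}H^2(\Omega)}^2$. By linearity $p-Q_kp=(q-Q_kq)+(r-Q_kr)$, and putting the first summand into the $L^2(\Omega)$ slot and the second into the $\beta^{-1}H^1(\Omega)$ slot of the infimum defining the sum-norm gives
\begin{equation*}
\|p-Q_kp\|_{L^2(\Omega)+\beta^{-1}H^1(\Omega)}^2 \le \|q-Q_kq\|_{L^2(\Omega)}^2 + \beta^{-1}\|r-Q_kr\|_{H^1(\Omega)}^2 .
\end{equation*}
The estimate $\|q-Q_kq\|_{L^2(\Omega)}\les h_k\|q\|_{H^1(\Omega)}$, together with the $H^1$-stability of the $L^2$-projection on quasi-uniform meshes (which yields $\|r-Q_kr\|_{H^1(\Omega)}\les h_k\|r\|_{H^2(\Omega)}$), then bounds the right-hand side by $h_k^2\big(\|q\|_{H^1(\Omega)}^2+\beta^{-1}\|r\|_{H^2(\Omega)}^2\big)\les h_k^2\|p\|_{H^1(\Omega)+\beta^{-1}H^2(\Omega)}^2 = \|p\|_{P_{+,k}}^2$. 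Combining the velocity and pressure estimates proves~\citecond{(A3)}. The two points requiring care are the mean-value--preserving property of $Q_k$ (which is precisely why the $L^2$-projection is preferable to a nodal interpolant here) and its $H^1$-stability; both are standard on the uniformly refined, hence quasi-uniform, meshes under consideration.
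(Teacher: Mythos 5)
Your proof is correct, and its velocity half coincides with the paper's own argument: a quasi-interpolant $\Pi_k$ into $U_k$ together with the two standard estimates $\|u-\Pi_ku\|_{H^1(\Omega)}\les h_k\|u\|_{H^2(\Omega)}$, $\|u-\Pi_ku\|_{L^2(\Omega)}\les h_k\|u\|_{H^1(\Omega)}$, inserted into the weighted norm $\|\cdot\|_U$. The pressure half, however, takes a genuinely different route. The paper decomposes $p=p_1+p_2$ with $p_1\in H^1(\Omega)$, $p_2\in H^2(\Omega)$ and approximates the two pieces by two \emph{independent} best approximations $p_{1,k},p_{2,k}\in P_k$, then sums; you instead apply one fixed linear operator, the $L^2$-projection $Q_k$ onto the unconstrained space $\hat{P}_k$, to $p$ itself, and split the error $p-Q_kp=(q-Q_kq)+(r-Q_kr)$ by linearity. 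Your variant needs an ingredient the paper does not invoke, namely $H^1$-stability of $Q_k$ on quasi-uniform meshes (equivalently, an inverse inequality combined with the $L^2$-boundedness of $Q_k$), but it buys a cleaner treatment of the zero-mean constraint: since $1\in\hat{P}_k$, the projection $Q_k$ preserves the mean, so $Q_kp\in P_k$ automatically, and the pieces $q,r$ themselves never need to be approximated inside $P_k$. This is a real advantage, because the paper's intermediate claim $\inf_{p_{1,k}\in P_k}\|p_1-p_{1,k}\|_{L^2(\Omega)}\les h_k\|p_1\|_{H^1(\Omega)}$ is, as stated, false when $p_1$ has nonzero mean: the constraint $P_k\subset L^2_0(\Omega)$ forces $\|p_1-p_{1,k}\|_{L^2(\Omega)}\ge|\Omega|^{-1/2}\,|\int_\Omega p_1\,\mbox{d}x|$, which does not vanish as $h_k\to0$. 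The paper's proof is repaired by normalizing the decomposition so that both pieces have zero mean (possible because $\int_\Omega p\,\mbox{d}x=0$ and subtracting the mean does not increase the $H^1$- resp.\ $H^2$-norm), whereas your argument sidesteps the issue entirely. Both proofs rely, as you note, on Lemma~\ref{lem:pe} to replace $\|\cdot\|_P$ by the sum norm $\|\cdot\|_{L^2(\Omega)+\beta^{-1}H^1(\Omega)}$, and both exploit that the infimum defining a sum norm may be estimated by any particular decomposition.
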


The proof of this theorem is rather easy, as it can be shown for $U$ and $P$ separately. For completeness, we give
a proof of this theorem in the Appendix.
The next step is to show a regularity result for the generalized
Stokes problem:
\begin{description}[style=sameline,leftmargin=1.1cm]
        \item[\citecond{(A4)}] For all grid levels $k$ and all $\mcF\in (X_-)^*$, the solution of the problem, 
        \begin{equation}\label{a4:problem}
               \mbox{find $x_{\mcF}\in X$ such that} \qquad \mcB(x_{\mcF},\tilde{x}) = \mcF(\tilde{x}) \qquad\mbox{ for all } \tilde{x}\in X,
        \end{equation}
        satisfies $x_{\mcF}\in X_+$ and $\|x_{\mcF}\|_{X_{+,k}} \les \|\mcF\|_{(X_{-,k})^*}$.
\end{description}
For showing~\citecond{(A4)}, we need a standard regularity result for the Poisson problem.
\begin{description}[style=sameline,leftmargin=1.1cm]
	        \item[\citecond{(R1)}] \emph{Regularity of the Poisson problem.}
	        For all $g\in L^2(\Omega)$, the solution of the problem, find $p_g\in H^1(\Omega)\cap L^2_0(\Omega)$ such that
	        \begin{equation*}
	          (\nabla p_g,\nabla \tilde{p})_{H^1(\Omega)} = (g,\tilde{p})_{L^2(\Omega)} \qquad\mbox{ for all }\tilde{p} \in H^1(\Omega)\cap L^2_0(\Omega),
	        \end{equation*}
	        satisfies $p_g\in H^2(\Omega)$ and
 	       $\|p_g\|_{H^2(\Omega)} \les \|g\|_{L^2(\Omega)}$.
\end{description}
\begin{lemma}
	The regularity assumption~\citecond{(R1)} holds for $\Omega\subset \mathbb{R}^2$ being a convex polygonal domain.
\end{lemma}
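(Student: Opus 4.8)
The plan is to recognize the weak problem in \citecond{(R1)} as the Neumann problem for the Laplacian subject to the mean-value normalization, and then to invoke the classical corner-regularity theory for second-order elliptic equations on polygons. Since the test functions $\tilde p$ range over $H^1(\Omega)\cap L^2_0(\Omega)$ (and not over a space constrained by a Dirichlet trace), the boundary condition implicitly enforced is the homogeneous Neumann condition $\partial_n p_g = 0$, and the strong form is $-\Delta p_g = g-\bar g$ in $\Omega$, where $\bar g$ denotes the mean value of $g$. Existence and uniqueness of $p_g \in H^1(\Omega)\cap L^2_0(\Omega)$ follow from the Lax--Milgram lemma, using the Poincar\'e--Wirtinger inequality to obtain coercivity of $(\nabla\cdot,\nabla\cdot)_{L^2(\Omega)}$ on the mean-zero subspace. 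The entire content of the lemma is therefore the \emph{a priori} estimate $\|p_g\|_{H^2(\Omega)} \les \|g\|_{L^2(\Omega)}$, which is a statement about elliptic regularity up to the corners of $\Omega$.

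Next I would localize by means of a partition of unity subordinate to a cover of $\overline{\Omega}$, splitting $p_g$ into a piece supported in the interior, finitely many pieces supported near the (straight) open edges, and one piece near each corner. On the interior piece, standard interior elliptic regularity gives the $H^2$ bound; on the edge pieces, regularity for the half-space Neumann problem (after flattening the straight edge) gives the $H^2$ bound. These contributions are routine and produce only the factor $\|g\|_{L^2(\Omega)}$ on the right-hand side.

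The delicate step, and the main obstacle, is the behavior near each corner $c_j$ with interior opening angle $\omega_j$. Here I would use the Kondrat'ev/Grisvard decomposition of the local solution into a \emph{regular} part lying in $H^2$ and a finite sum of \emph{singular} functions behaving like $r^{\lambda}\,\phi(\theta)$ in local polar coordinates $(r,\theta)$ centred at $c_j$, where for the Neumann Laplacian the admissible exponents are $\lambda = \ell\pi/\omega_j$ with $\ell=1,2,\dots$. Such a singular function belongs to $H^2$ near the vertex precisely when $\lambda > 1$. Convexity of $\Omega$ guarantees $\omega_j < \pi$ for every corner, hence $\pi/\omega_j > 1$, so the smallest admissible exponent already exceeds $1$ and no non-$H^2$ singular term can appear in the expansion. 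Consequently the corner pieces lie in $H^2$ as well, and assembling the pieces yields $p_g\in H^2(\Omega)$ together with the asserted bound. In practice I would simply cite the corresponding theorem of Grisvard's monograph on elliptic problems in nonsmooth domains (the Neumann counterpart of the convex-polygon $H^2$-regularity result), in direct analogy to the way the preceding lemma quotes Kellogg--Osborn for the Stokes system; the argument sketched above is exactly what underlies that reference.
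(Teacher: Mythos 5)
Your proposal is correct and matches the paper's approach: the paper's entire ``proof'' is a citation to Dauge's monograph on elliptic boundary value problems in corner domains, and your argument (identifying the weak problem as the homogeneous Neumann problem for $-\Delta p_g = g-\bar g$, then excluding corner singularities because the Neumann exponents $\ell\pi/\omega_j$ exceed $1$ when every opening angle $\omega_j<\pi$) is exactly the classical Kondrat'ev/Grisvard theory underlying such a citation. Quoting Grisvard rather than Dauge is an equally standard reference for this convex-polygon $H^2$-regularity result, so nothing is missing.
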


For a proof oft this lemma, see, e.g.,~\cite{Dauge:1988}.
The first step of the proof of~\citecond{(A4)}, is to show that~$x_{\mcF}$ is in the desired space.
\begin{lemma}\label{lem:step2}
	Suppose that $\Omega$ is such that the regularity assumption~\citecond{(R)} holds.
	Let $\mcF \in (X_-)^*$ and let $x_{\mcF}$ be the solution of~\eqref{a4:problem}. Then $x_{\mcF}\in X_{+}$.
\end{lemma}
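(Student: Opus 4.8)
The plan is to show that the solution $x_{\mcF}=(u_{\mcF},p_{\mcF})$ of the generalized Stokes problem~\eqref{a4:problem} inherits the extra regularity encoded in $X_+=U_+\times P_+$ whenever the data $\mcF=(f,g)$ lies in $(X_-)^*=[L^2(\Omega)]^2\times(H^1_0(\Omega)\cap L^2_0(\Omega))$. The crucial observation is that the generalized Stokes equations $-\Delta u+\beta u+\nabla p=f$, $\nabla\cdot u=g$ can be rewritten as a \emph{standard} (stationary) Stokes system by moving the reaction term $\beta u$ to the right-hand side: $-\Delta u+\nabla p=f-\beta u=:\tilde f$ and $\nabla\cdot u=g$. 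Since $\mcF\in(X_-)^*$ forces $f\in[L^2(\Omega)]^2$ and $g\in H^1_0(\Omega)\cap L^2_0(\Omega)$, the regularity assumption~\citecond{(R)} will be the workhorse. The only issue is that $\tilde f$ depends on $u_{\mcF}$ itself, so I first need enough a~priori control of $u_{\mcF}$ in $L^2(\Omega)$ to guarantee $\tilde f\in[L^2(\Omega)]^2$, after which \citecond{(R)} yields $u_{\mcF}\in[H^2(\Omega)]^2$ and $p_{\mcF}\in H^1(\Omega)$.

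First I would establish the basic energy/stability estimate. Since $\mcF\in(X_-)^*\subset X^*$ (the weaker norm $\|\cdot\|_{X_{-,k}}$ is dominated, up to grid-dependent constants, by $\|\cdot\|_X$, so its dual dominates $\|\cdot\|_{X^*}$), the well-posedness statement~\citecond{(A1)} gives $x_{\mcF}\in X$ with $\|x_{\mcF}\|_X\les\|\mcF\|_{X^*}<\infty$. In particular $u_{\mcF}\in[H^1_0(\Omega)]^2$ and $p_{\mcF}\in L^2_0(\Omega)$, so $\beta u_{\mcF}\in[L^2(\Omega)]^2$ and hence $\tilde f=f-\beta u_{\mcF}\in[L^2(\Omega)]^2$. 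This is the step that legitimizes treating the problem as a standard Stokes system with $L^2$ right-hand side.

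Next I would apply \citecond{(R)} to the standard Stokes problem with data $(\tilde f,g)$. The hypotheses of \citecond{(R)} are met: $\tilde f\in[L^2(\Omega)]^2$ and $g\in H^1_0(\Omega)\cap L^2_0(\Omega)$ by the membership $\mcF\in(X_-)^*$. The conclusion is $(u_{\mcF},p_{\mcF})\in[H^2(\Omega)]^2\times H^1(\Omega)$, which is precisely the statement $u_{\mcF}\in U_+$ and $p_{\mcF}\in P_+$, i.e. $x_{\mcF}\in X_+$. I would note that the zero-mean and boundary conditions are preserved, so the membership in $P_+=H^1(\Omega)\cap L^2_0(\Omega)$ and in $U_+=[H^2(\Omega)\cap H^1_0(\Omega)]^2$ is genuine.

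The main obstacle is conceptual rather than technical: one must verify that the reaction term $\beta u_{\mcF}$ does not destroy the applicability of the elliptic regularity result \citecond{(R)} (which is stated only for the plain Stokes operator without the $\beta$-term), and one must be careful that the a~priori bound placing $u_{\mcF}\in L^2(\Omega)$ comes for free from \citecond{(A1)} before any regularity is invoked — otherwise the argument is circular. Here the qualitative nature of the present lemma helps: since we only claim membership $x_{\mcF}\in X_+$ (the quantitative $\beta$- and $h_k$-robust bound $\|x_{\mcF}\|_{X_{+,k}}\les\|\mcF\|_{(X_{-,k})^*}$ of condition~\citecond{(A4)} is deferred to a subsequent lemma), it suffices to know $\tilde f\in[L^2(\Omega)]^2$, with no tracking of the dependence on $\beta$ required. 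I therefore expect the proof to be short, with the bookkeeping of function spaces being the only place demanding care.
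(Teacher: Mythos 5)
Your proposal is correct and follows essentially the same route as the paper: move the reaction term $\beta u_{\mcF}$ to the right-hand side, observe that $f-\beta u_{\mcF}\in[L^2(\Omega)]^2$ and $g\in H^1_0(\Omega)\cap L^2_0(\Omega)$, and apply the Stokes regularity assumption~\citecond{(R)} to conclude $x_{\mcF}\in X_+$. The paper leaves the a~priori membership $u_{\mcF}\in[L^2(\Omega)]^2$ implicit (it follows from $x_{\mcF}\in X$ by the formulation of~\eqref{a4:problem}), whereas you justify it via~\citecond{(A1)}; this is only a minor difference in bookkeeping, not in substance.
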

\begin{proof}
	Let $\mcF(\tilde{u},\tilde{p}):= (f,\tilde{u})_{L^2(\Omega)}+(g,\tilde{p})_{L^2(\Omega)}$
	for $f \in [L^2(\Omega)]^2$ and $g \in H^1_0(\Omega)\cap L^2_0(\Omega)$.
	Rewrite the problem as follows:
        \begin{equation*}
          \begin{array}{lclclcl}
              (\nabla u_{\mcF},\nabla \tilde{u})_{L^2(\Omega)} &+&
              (p_{\mcF},\nabla \cdot \tilde{u})_{L^2(\Omega)} &=&
              (f,\tilde{u})_{L^2(\Omega)}- \beta
              (u_{\mcF},\tilde{u})_{L^2(\Omega)}\\
              (\nabla \cdot u_{\mcF},\tilde{p})_{L^2(\Omega)} 
              &&&=& (g,\tilde{p})_{L^2(\Omega)}
           \end{array}                
        \end{equation*}
	for all $\tilde{u} \in U$ and $\tilde{p}\in P$.
	As $f - \beta u_{\mcF} \in [L^2(\Omega)]^2$
	and $g\in H^1_0(\Omega)\cap L^2_0(\Omega)$,
	we obtain using regularity assumption~\citecond{(R)} 
	immediately that $x_{\mcF}\in X_{+}$.
\end{proof}

Note that the combination of the argument used in the proof of 
this lemma and condition~\citecond{(A1)}
implies $\|x_{\mcF}\|_{X_{+,k}} \le C(\beta) \|\mcF\|_{(X_{-,k})^*}$, where
$C(\beta)$ is a constant depending on $\beta$. For showing a robust estimate,
we need to do some more work.

\begin{lemma}\label{lem:aux1}
	Suppose that $\Omega$ is such that the assumptions~\citecond{(R)} and~\citecond{(R1)} are satisfied.
	Let $\mcF \in (X_-)^*$ and let
	$x_{\mcF}=(u_{\mcF},p_{\mcF})$ be the solution of~\eqref{a4:problem}. Then
	\begin{equation*}
		\|p_{\mcF}\|_{P_{+,k}}^2 \les \|u_{\mcF}\|_{U_{+,k}}^2
			+ \|\mcF\|_{(X_{-,k})^*}^2.
	\end{equation*}
\end{lemma}

\begin{lemma}\label{lem:aux2}
	Suppose that $\Omega$ is such that the assumption~\citecond{(R)} is satisfied.
	Let $\mcF\in (X_-)^*$ and let
	$x_{\mcF}=(u_{\mcF},p_{\mcF})$ be the solution of~\eqref{a4:problem}. Then
	\begin{equation*}
			\|u_{\mcF}\|_{U_{+,k}}^2
					\les \|\mcF\|_{(X_{-,k})^*} \|x_{\mcF}\|_{X_{+,k}}.
	\end{equation*}
\end{lemma}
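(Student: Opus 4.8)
The goal is to bound the stronger-space velocity norm $\|u_{\mcF}\|_{U_{+,k}}^2 = h_k^2\|u_{\mcF}\|_{H^2(\Omega)\cap\beta H^1(\Omega)}^2 = h_k^2(\|u_{\mcF}\|_{H^2(\Omega)}^2 + \beta\|u_{\mcF}\|_{H^1(\Omega)}^2)$ by the product $\|\mcF\|_{(X_{-,k})^*}\|x_{\mcF}\|_{X_{+,k}}$. The plan is to treat the two pieces of the $U_{+,k}$-norm separately, invoking the regularity assumption~\citecond{(R)} for the $H^2$-part and an energy/duality estimate for the $\beta H^1$-part, and then to absorb everything into the claimed product by Young's inequality.

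First I would write $\mcF(\tilde u,\tilde p) = (f,\tilde u)_{L^2(\Omega)} + (g,\tilde p)_{L^2(\Omega)}$ with $f\in[L^2(\Omega)]^2$ and $g\in H^1_0(\Omega)\cap L^2_0(\Omega)$, and reuse the reformulation from the proof of Lemma~\ref{lem:step2}: $(u_{\mcF},p_{\mcF})$ solves the \emph{stationary} Stokes problem with right-hand side $(f-\beta u_{\mcF},\,g)$. Applying~\citecond{(R)} to this reformulated problem gives
\begin{equation*}
	\|u_{\mcF}\|_{H^2(\Omega)}^2 + \|p_{\mcF}\|_{H^1(\Omega)}^2
		\les \|f-\beta u_{\mcF}\|_{L^2(\Omega)}^2 + \|g\|_{H^1(\Omega)}^2
		\les \|f\|_{L^2(\Omega)}^2 + \beta^2\|u_{\mcF}\|_{L^2(\Omega)}^2 + \|g\|_{H^1(\Omega)}^2.
\end{equation*}
This controls the $h_k^2\|u_{\mcF}\|_{H^2(\Omega)}^2$ part. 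The terms $h_k^2\|f\|_{L^2(\Omega)}^2$ and $h_k^2\|g\|_{H^1(\Omega)}^2$ are, up to the mixing inherent in the sum/intersection norms, bounded by $\|\mcF\|_{(X_{-,k})^*}^2$; the remaining term $h_k^2\beta^2\|u_{\mcF}\|_{L^2(\Omega)}^2$ I would control by testing the generalized problem with $\tilde x = x_{\mcF}$ to extract the energy bound $\beta\|u_{\mcF}\|_{L^2(\Omega)}^2 \le \mcB(x_{\mcF},x_{\mcF}) = \mcF(x_{\mcF}) \le \|\mcF\|_{(X_{-,k})^*}\|x_{\mcF}\|_{X_{-,k}}$, together with $\|x_{\mcF}\|_{X_{-,k}}\les\|x_{\mcF}\|_{X_{+,k}}$ (via~\citecond{(A2)}-type scaling and the inverse relation between $h_k^{-2}$ and $h_k^2$ weights). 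This is the step that injects the factor $\|x_{\mcF}\|_{X_{+,k}}$ on the right-hand side and produces the product structure rather than a squared norm.

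For the $h_k^2\beta\|u_{\mcF}\|_{H^1(\Omega)}^2$ part I would again test with $\tilde x = x_{\mcF}$: since $\mcB(x_{\mcF},x_{\mcF}) = \|\nabla u_{\mcF}\|_{L^2(\Omega)}^2 + \beta\|u_{\mcF}\|_{L^2(\Omega)}^2 = \mcF(x_{\mcF})$, the Dirichlet part $\|u_{\mcF}\|_{H^1(\Omega)}^2\eqsim\|\nabla u_{\mcF}\|_{L^2(\Omega)}^2$ is also bounded by $\mcF(x_{\mcF})\le\|\mcF\|_{(X_{-,k})^*}\|x_{\mcF}\|_{X_{-,k}}\les\|\mcF\|_{(X_{-,k})^*}\|x_{\mcF}\|_{X_{+,k}}$. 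Multiplying by $h_k^2\beta$ and tracking the weights carefully across the parameter-dependent and grid-dependent scalings should land exactly on the claimed bound. The main obstacle will be the bookkeeping of the $\beta$- and $h_k$-weights so that the $H^2$-estimate from~\citecond{(R)} and the energy estimate combine into $\|\mcF\|_{(X_{-,k})^*}\|x_{\mcF}\|_{X_{+,k}}$ rather than into a $\beta$-dependent constant times $\|\mcF\|_{(X_{-,k})^*}^2$; the cross-term $h_k^2\beta^2\|u_{\mcF}\|_{L^2(\Omega)}^2$ is delicate precisely because a naive energy bound only yields $\beta\|u_{\mcF}\|_{L^2}^2$, so the extra factor of $\beta$ must be paid for by the stronger norm $\|x_{\mcF}\|_{X_{+,k}}$ (whose pressure-free velocity component carries the $h_k^2\beta\|u_{\mcF}\|_{H^1}^2$ weight), and verifying that these scalings line up is where the argument is most likely to require care.
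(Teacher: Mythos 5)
Your outline has three genuine gaps, and the first two are inequalities that are false precisely in the $\beta$- and $h_k$-uniformity the lemma asserts. First, you need $h_k^2\|f\|_{L^2(\Omega)}^2 \les \|\mcF\|_{(X_{-,k})^*}^2$ (the $g$-term is harmless, since an intersection norm dominates its constituents), but the dual norm of $f$ is a \emph{sum} norm, $\|f\|_{(U_{-,k})^*}=h_k\|f\|_{L^2(\Omega)+\beta^{-1}H^1_0(\Omega)}$, and a sum norm is \emph{smaller} than each constituent, not larger: for fixed $f\in[H^1_0(\Omega)]^2$ one has $\|f\|_{L^2(\Omega)+\beta^{-1}H^1_0(\Omega)}^2\le\beta^{-1}\|f\|_{H^1(\Omega)}^2\rightarrow 0$ as $\beta\rightarrow\infty$ while $\|f\|_{L^2(\Omega)}$ stays fixed. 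So ``up to the mixing inherent in the sum/intersection norms'' conceals exactly the point of the lemma; this is why the paper never estimates $\|f\|_{L^2(\Omega)}$ by itself but always splits $f=f_1+f_2$ with $f_2\in[H^1_0(\Omega)]^2$ arbitrary and keeps both pieces (cf.\ the proof of Lemma~\ref{lem:aux1}). Second, the step meant to inject the factor $\|x_{\mcF}\|_{X_{+,k}}$ rests on $\|x_{\mcF}\|_{X_{-,k}}\les\|x_{\mcF}\|_{X_{+,k}}$, which is false: the $X_{-,k}$-norm puts the weight $h_k^{-2}$ on weak norms while the $X_{+,k}$-norm puts $h_k^{2}$ on strong norms, and $x_{\mcF}$ is a continuous solution, not a finite element function, so no inverse inequality is available; for fixed $x_{\mcF}$ the left-hand side grows like $h_k^{-1}$ while the right-hand side decays like $h_k$. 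Third, even after repairing the energy identity (you dropped the cross term: $\mcB(x_{\mcF},x_{\mcF})=\|\nabla u_{\mcF}\|_{L^2(\Omega)}^2+\beta\|u_{\mcF}\|_{L^2(\Omega)}^2+2(g,p_{\mcF})_{L^2(\Omega)}$, nonzero unless $g=0$), the term $h_k^2\beta^2\|u_{\mcF}\|_{L^2(\Omega)}^2$ produced by feeding $f-\beta u_{\mcF}$ into \citecond{(R)} cannot be absorbed: the energy bound supplies only one factor of $\beta$, and the leftover factor would require an estimate of the type $\beta\|u_{\mcF}\|_{L^2(\Omega)\cap\beta[H^1_0(\Omega)]^*}\les\|u_{\mcF}\|_{H^2(\Omega)\cap\beta H^1(\Omega)}$, which again fails as $\beta\rightarrow\infty$.

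The paper's proof avoids all of this by never applying \citecond{(R)} to $u_{\mcF}$ and never comparing the $X_{-,k}$- and $X_{+,k}$-norms. Instead it tests \eqref{a4:problem} with $\tilde{x}^{\epsilon}=(-\Delta^{\epsilon}u_{\mcF},\nabla\cdot\nabla^{\epsilon}p_{\mcF})$, where $-\Delta^{\epsilon}u_{\mcF}$ and $\nabla^{\epsilon}p_{\mcF}$ are $[H^1_0(\Omega)]^2$-valued $L^2$-approximations of $-\Delta u_{\mcF}$ and $\nabla p_{\mcF}$. On one side, the velocity block of $\mcB$ produces $\|u_{\mcF}\|_{H^2(\Omega)}^2+\beta\|u_{\mcF}\|_{H^1(\Omega)}^2$ up to $O(\epsilon)$ terms (using $\|u\|_{H^2(\Omega)}\les\|\Delta u\|_{L^2(\Omega)}$ on convex domains), while the two off-diagonal terms cancel each other after integration by parts; on the other side, $(f,-\Delta^{\epsilon}u_{\mcF})_{L^2(\Omega)}$ and $(g,\nabla\cdot\nabla^{\epsilon}p_{\mcF})_{L^2(\Omega)}$ are estimated by splitting $f$ and $p_{\mcF}$ so that each piece pairs with the matching factor, which is exactly what yields $\|\mcF\|_{(X_{-,k})^*}\|x_{\mcF}\|_{X_{+,k}}$ with the correct $\beta$- and $h_k$-weights; then $\epsilon\rightarrow0$. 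The one sound instinct in your outline---that the product structure must come from the identity $\mcB(x_{\mcF},\cdot)=\mcF(\cdot)$ evaluated at a test function tied to $x_{\mcF}$ itself---is indeed the core of the paper's argument, but the correct test function is (a regularization of) $(-\Delta u_{\mcF},\Delta p_{\mcF})$, not $x_{\mcF}$.
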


The proofs of these two lemmas can be found in the Appendix. Using them, we can show~\citecond{(A4)}:
\begin{theorem}\label{thrm:a4}
        If $\Omega$ is such that the regularity assumptions~\citecond{(R)} and~\citecond{(R1)}
        are satisfied, then the regularity statement~\citecond{(A4)} holds.
\end{theorem}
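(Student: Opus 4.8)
The plan is to establish \citecond{(A4)} by combining the three preceding lemmas into a single self-bootstrapping estimate. Since \citecond{(A4)} requires showing both $x_{\mcF}\in X_+$ and the robust bound $\|x_{\mcF}\|_{X_{+,k}} \les \|\mcF\|_{(X_{-,k})^*}$, the first half is already delivered by Lemma~\ref{lem:step2}, so the whole task reduces to proving the parameter-independent estimate. First I would recall that by definition $\|x_{\mcF}\|_{X_{+,k}}^2 = \|u_{\mcF}\|_{U_{+,k}}^2 + \|p_{\mcF}\|_{P_{+,k}}^2$, so it suffices to bound each summand by $\|\mcF\|_{(X_{-,k})^*}^2$ up to a constant.

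The key idea is to chain Lemmas~\ref{lem:aux1} and~\ref{lem:aux2} so that the pressure contribution is controlled by the velocity contribution, and the velocity contribution is in turn controlled by the full $X_{+,k}$-norm via a product that can be absorbed. Concretely, I would start from Lemma~\ref{lem:aux1}, which gives $\|p_{\mcF}\|_{P_{+,k}}^2 \les \|u_{\mcF}\|_{U_{+,k}}^2 + \|\mcF\|_{(X_{-,k})^*}^2$, and add $\|u_{\mcF}\|_{U_{+,k}}^2$ to both sides to obtain $\|x_{\mcF}\|_{X_{+,k}}^2 \les \|u_{\mcF}\|_{U_{+,k}}^2 + \|\mcF\|_{(X_{-,k})^*}^2$. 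Then I would insert the bound from Lemma~\ref{lem:aux2}, namely $\|u_{\mcF}\|_{U_{+,k}}^2 \les \|\mcF\|_{(X_{-,k})^*}\,\|x_{\mcF}\|_{X_{+,k}}$, yielding
\begin{equation*}
	\|x_{\mcF}\|_{X_{+,k}}^2 \les \|\mcF\|_{(X_{-,k})^*}\,\|x_{\mcF}\|_{X_{+,k}} + \|\mcF\|_{(X_{-,k})^*}^2.
\end{equation*}

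The main obstacle — really the only nontrivial step at this level, since the hard analytic work lives inside Lemmas~\ref{lem:aux1} and~\ref{lem:aux2} — is closing this quadratic-in-$\|x_{\mcF}\|_{X_{+,k}}$ inequality into a clean linear bound. I would handle the mixed term by Young's inequality, writing $\|\mcF\|_{(X_{-,k})^*}\,\|x_{\mcF}\|_{X_{+,k}} \le \tfrac12 \epsilon^{-1}\|\mcF\|_{(X_{-,k})^*}^2 + \tfrac12 \epsilon \|x_{\mcF}\|_{X_{+,k}}^2$ with $\epsilon$ small enough that the $\|x_{\mcF}\|_{X_{+,k}}^2$ term can be absorbed into the left-hand side. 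This is legitimate precisely because Lemma~\ref{lem:step2} already guarantees $\|x_{\mcF}\|_{X_{+,k}}$ is finite, so the absorption manipulates only finite quantities. After absorbing, I would be left with $\|x_{\mcF}\|_{X_{+,k}}^2 \les \|\mcF\|_{(X_{-,k})^*}^2$, and taking square roots gives exactly the estimate claimed in \citecond{(A4)}. Since every implied constant in the chain is independent of $k$ and $\beta$ (by the hypotheses of the two auxiliary lemmas and the Notation convention), the resulting constant is robust, which completes the proof.
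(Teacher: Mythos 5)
Your proof is correct and follows essentially the same route as the paper: Lemma~\ref{lem:step2} for membership in $X_+$, then Lemma~\ref{lem:aux1} chained with Lemma~\ref{lem:aux2} to produce the quadratic inequality in $\|x_{\mcF}\|_{X_{+,k}}$. The only difference is cosmetic: you close that inequality via Young's inequality and absorption (correctly noting that the finiteness guaranteed by Lemma~\ref{lem:step2} legitimizes the absorption), whereas the paper solves the quadratic explicitly to obtain the constant $\tfrac12\left(C+\sqrt{4C+C^2}\right)$; both yield the same robust bound.
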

\begin{proof}
	Note that Lemma~\ref{lem:step2} already states 
	that $x_{\mcF}\in X_{+}$.
	It remains to show
	$
		\|x_{\mcF}\|_{X_{+,k}} \les \|\mcF\|_{(X_{-,k})^*}
	$
	Note that Lemma~\ref{lem:aux1} implies immediately
	\begin{align}\nonumber
			\|x_{\mcF}\|_{X_{+,k}}^2 = \|u_{\mcF}\|_{U_{+,k}}^2+\|p_{\mcF}\|_{P_{+,k}}^2  \les 
				\|u_{\mcF}\|_{U_{+,k}}^2
				+\|\mcF\|_{(X_{-,k})^*}^2
	\end{align}
	If we combine this result with the statement of
	Lemma~\ref{lem:aux2}, we obtain
	\begin{align}\nonumber
			\|x_{\mcF}\|_{X_{+,k}}^2  \le C \left(
				\|x_{\mcF}\|_{X_{+,k}}\|\mcF\|_{(X_{-,k})^*}
				+\|\mcF\|_{(X_{-,k})^*}^2 \right)
	\end{align}
	for some constant $C>0$ (independent of $k$ and $\beta$) which implies
	\begin{align}\nonumber
		\|x_{\mcF}\|_{X_{+,k}}  \le \frac{1}{2}\left(C + \sqrt{4C+C^2}\right)\|\mcF\|_{(X_{-,k})^*}, 
	\end{align}
	and the desired estimate. This finishes the proof.
\end{proof}

As we have shown that the conditions~\citecond{(A1)},
\citecond{(A1a)}, \citecond{(A3)} and~\citecond{(A4)} are satisfied, we can apply
Theorem~4.1 in~\cite{Takacs:Zulehner:2012} and obtain:
\begin{theorem}\label{thrm:main}
	Assume that $\Omega$ is such that \citecond{(R)} and \citecond{(R1)}
	are satisfied and assume that the problem is discretized using the Taylor-Hood element.
        Then the coarse-grid correction~\eqref{eq:method:cga} satisfies the approximation 
	property~\eqref{eq:apprp:raute}
	 where $\tilde{C}_A$ only depends on the constants that appear in the conditions~\citecond{(A1)},
	\citecond{(A1a)}, \citecond{(A3)} and~\citecond{(A4)}.
\end{theorem}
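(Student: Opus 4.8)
The plan is to treat Theorem~\ref{thrm:main} as an instance of the abstract approximation theorem (Theorem~4.1 in~\cite{Takacs:Zulehner:2012}): once the four structural conditions~\citecond{(A1)}, \citecond{(A1a)}, \citecond{(A3)} and~\citecond{(A4)} are available with constants that are uniform in the grid level~$k$ and in~$\beta$, the approximation property~\eqref{eq:apprp:raute} follows mechanically, with $\tilde{C}_A$ assembled from the constants of those four conditions. Accordingly, almost all of the work is front-loaded into the preceding sections, and the proof itself is a verification-and-assembly step.

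First I would collect the ingredients. Condition~\citecond{(A1)} is the continuous stability estimate (Lemma~2.1 in~\cite{Olshanskii:2012}, restated above); condition~\citecond{(A1a)} is its discrete counterpart for the Taylor--Hood discretization; condition~\citecond{(A3)} is the approximation error estimate of Theorem~\ref{thrm:a3}; and condition~\citecond{(A4)} is the robust regularity statement of Theorem~\ref{thrm:a4}. Crucially, I would check that the norm families $\|\cdot\|_X$, $\|\cdot\|_{X_{-,k}}$ and $\|\cdot\|_{X_{+,k}}$ used here are exactly those that the abstract theorem expects, so that the duality pairing between $X_{-,k}$ and $(X_{-,k})^*$ and the scalings by $h_k$ and $\beta$ are consistent across the three scales.

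With the conditions in place, the abstract mechanism runs as follows. One reinterprets the coarse-grid correction error $x_k^{(1)}-x_k^*$ from~\eqref{eq:method:cga} as the difference $w_k-w_{k-1}$ of the fine- and coarse-grid Galerkin approximations of the continuous auxiliary problem~\eqref{a4:problem} whose right-hand side is the current defect $\mcA_k(x_k^{(0,\nu)}-x_k^*)$. Condition~\citecond{(A4)} lifts the continuous solution $w=x_{\mcF}$ into the stronger space $X_+$ with $\|w\|_{X_{+,k}}\les\|\mcF\|_{(X_{-,k})^*}$; condition~\citecond{(A3)} together with the stability~\citecond{(A1)}/\citecond{(A1a)} then bounds the discretization errors $\|w-w_k\|_X$ and $\|w-w_{k-1}\|_X$ by $\|w\|_{X_{+,k}}$; and an Aubin--Nitsche type duality argument upgrades the resulting $\|\cdot\|_X$-estimate to the weaker, $L^2$-like norm $\|\cdot\|_{X_{-,k}}$, supplying the extra power of $h_k$ that is the essence of the approximation property.

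The hard part is not this final assembly but guaranteeing that everything is robust in~$\beta$ simultaneously with robustness in~$h_k$; this robustness is precisely what the nonstandard $\beta$- and $h_k$-weighted definitions of $X_{-,k}$ and $X_{+,k}$ are engineered to deliver, and it is already secured by the uniform constants in Theorems~\ref{thrm:a3} and~\ref{thrm:a4} (most notably the $\beta$-independent regularity~\citecond{(A4)}). Once that is granted, the constant $\tilde{C}_A$ produced by Theorem~4.1 in~\cite{Takacs:Zulehner:2012} depends only on the constants appearing in~\citecond{(A1)}, \citecond{(A1a)}, \citecond{(A3)} and~\citecond{(A4)}, which is exactly the claim.
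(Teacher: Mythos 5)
Your proposal matches the paper's proof: the paper establishes Theorem~\ref{thrm:main} exactly by noting that conditions~\citecond{(A1)}, \citecond{(A1a)}, \citecond{(A3)} (Theorem~\ref{thrm:a3}) and~\citecond{(A4)} (Theorem~\ref{thrm:a4}) have been verified with constants uniform in~$k$ and~$\beta$, and then invoking Theorem~4.1 in~\cite{Takacs:Zulehner:2012}, which is precisely your verification-and-assembly strategy. Your additional sketch of the abstract theorem's internal duality mechanism is accurate background but not part of the paper's argument, which delegates that machinery entirely to the cited reference.
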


This shows \eqref{eq:apprp:raute}, the approximation property in the norm~$\|\cdot\|_{X_{-,k}}$, however we need~\eqref{eq:apprp}, the
approximation property in the norm~$\hnorm \cdot\hnorm_k$. The final step is to relate these two norms:
\begin{lemma}\label{lem:equiv}
	The estimate
	\begin{equation}\label{eq:equiv}
			\hnorm x_k\hnorm_k \les \|x_k\|_{X_{-,k}}
	\end{equation}
	 holds for all $x_k \in X_k$
\end{lemma}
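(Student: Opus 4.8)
The plan is to exploit that both sides of~\eqref{eq:equiv} split additively into independent velocity and pressure contributions. Since $M_{U,k}$ and $M_{P,k}$ represent the $L^2$-inner products exactly, one has $\hnorm x_k\hnorm_k = \|x_k\|_{X_{0,k}}$, and both $\|\cdot\|_{X_{0,k}}$ and $\|\cdot\|_{X_{-,k}}$ are of the form $\|u_k\|_\bullet^2 + \|p_k\|_\bullet^2$. Hence it suffices to prove the two scalar estimates $\|u_k\|_{U_{0,k}}\les\|u_k\|_{U_{-,k}}$ for all $u_k\in U_k$ and $\|p_k\|_{P_{0,k}}\les\|p_k\|_{P_{-,k}}$ for all $p_k\in P_k$. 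Unfolding the definitions, the first reads $(h_k^{-2}+\beta)\|u_k\|_{L^2(\Omega)}^2\les h_k^{-2}(\|u_k\|_{L^2(\Omega)}^2 + \beta\|u_k\|_{[H^1_0(\Omega)]^*}^2)$ and the second reads $\|p_k\|_{L^2(\Omega)}^2\les(h_k^{-2}+\beta)\|p_k\|_{[H^1_0(\Omega)]^* + \beta^{-1}L^2_0(\Omega)}^2$. In both cases the point is that $\|\cdot\|_{X_{-,k}}$ carries compensating factors $h_k^{-2}$, so each inequality is a negative-norm inverse inequality in disguise.

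For the velocity estimate I would first cancel the matching summands $h_k^{-2}\|u_k\|_{L^2(\Omega)}^2$ that appear on both sides; what remains is $\beta\|u_k\|_{L^2(\Omega)}^2\les h_k^{-2}\beta\|u_k\|_{[H^1_0(\Omega)]^*}^2$, i.e.\ the inverse inequality $\|u_k\|_{L^2(\Omega)}\les h_k^{-1}\|u_k\|_{[H^1_0(\Omega)]^*}$. This one is immediate: since $U_k\subset[H^1_0(\Omega)]^2$, the function $u_k$ is itself an admissible test function for the dual norm, so $\|u_k\|_{[H^1_0(\Omega)]^*}\ge(u_k,u_k)_{L^2(\Omega)}/\|u_k\|_{H^1(\Omega)} = \|u_k\|_{L^2(\Omega)}^2/\|u_k\|_{H^1(\Omega)}$, and the standard finite element inverse inequality $\|u_k\|_{H^1(\Omega)}\les h_k^{-1}\|u_k\|_{L^2(\Omega)}$ finishes it.

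For the pressure estimate I would pass to the dual characterization of the sum norm, namely $\|p_k\|_{[H^1_0(\Omega)]^*+\beta^{-1}L^2_0(\Omega)} = \nnsup{f}{H^1_0(\Omega)\cap L^2_0(\Omega)}(p_k,f)_{L^2(\Omega)}/(\|f\|_{H^1(\Omega)}^2+\beta\|f\|_{L^2(\Omega)}^2)^{1/2}$, which follows from $(A+B)^* = A^*\cap B^*$. To bound this supremum from below it suffices to produce a single admissible $f\in H^1_0(\Omega)\cap L^2_0(\Omega)$ with $(p_k,f)_{L^2(\Omega)}\ges\|p_k\|_{L^2(\Omega)}^2$ and $\|f\|_{H^1(\Omega)}^2+\beta\|f\|_{L^2(\Omega)}^2\les(h_k^{-2}+\beta)\|p_k\|_{L^2(\Omega)}^2$; inserting such an $f$ yields the claim directly, the $\beta$-term in the denominator being absorbed into the $h_k^{-2}$ once the inverse inequality $\|\nabla f\|_{L^2(\Omega)}\les h_k^{-1}\|f\|_{L^2(\Omega)}$ is applied to an $f$ taken in the finite element space.

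The hard part is precisely the construction of this test function $f$, and it is where the pressure case genuinely differs from the velocity case. Unlike $u_k$, the Taylor--Hood pressure $p_k$ is a continuous piecewise linear function that does not vanish on $\partial\Omega$ and is constrained to mean zero, so $p_k$ itself is \emph{not} admissible, and the naive remedy of setting the boundary nodal values of $p_k$ to zero fails: a pressure concentrated in the $O(h_k)$ boundary layer would be annihilated, giving $(p_k,f)_{L^2(\Omega)} = 0$. What is actually needed is the negative-norm inverse inequality $\|p_k\|_{L^2(\Omega)}\les h_k^{-1}\|p_k\|_{[H^1_0(\Omega)]^*}$ for finite element functions, which does hold by a scaling argument producing test functions that are $O(1)$ in the interior of the boundary elements (rather than pinned to zero at their boundary nodes) and that can be corrected to mean zero by a lower-order interior perturbation. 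I would either carry out this element-wise scaling/bubble construction or cite it from the finite element literature; the $\beta$-weighted and large-$\beta$ regimes then follow by combining it with the trivial bound coming from the $\beta^{-1}L^2_0(\Omega)$-summand, so that this single inverse inequality drives the whole pressure estimate.
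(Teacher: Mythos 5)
Your reduction to the two componentwise estimates is correct, and your velocity argument is complete and sound; it is in substance the paper's own (test the dual norm with $u_k$ itself, then apply the standard inverse inequality), just organized as a term-by-term comparison instead of via restricted infima. The pressure estimate, however, is where the entire content of the lemma lies, and there you stop short of a proof. You correctly identify that everything hinges on exhibiting a single admissible $f\in H^1_0(\Omega)\cap L^2_0(\Omega)$ with $(p_k,f)_{L^2(\Omega)}\ges \|p_k\|_{L^2(\Omega)}^2$, $\|f\|_{L^2(\Omega)}\les\|p_k\|_{L^2(\Omega)}$ and $\|f\|_{H^1(\Omega)}\les h_k^{-1}\|p_k\|_{L^2(\Omega)}$, but you never construct it: ``carry out this element-wise scaling/bubble construction or cite it from the finite element literature'' defers exactly the step the paper devotes its Appendix to. The paper's choice is $f:=(p_k-a)\psi_k$, where $\psi_k$ is the piecewise linear cutoff equal to $0$ at boundary vertices and $1$ at interior vertices, and $a$ is the constant making $(p_k-a)\psi_k$ mean-free; the three required properties are then precisely $|\psi_k|\le 1$, inequality~\eqref{lem:equiv:aux2}, and inequality~\eqref{eq:kinv}, the last of which is proved by explicit reference-element computations and is not a quotable off-the-shelf fact in the $\beta$-weighted, mean-zero form needed here. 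Note also that the scalar inequality $\|p_k\|_{L^2(\Omega)}\les h_k^{-1}\|p_k\|_{[H^1_0(\Omega)]^*}$ you propose to cite is by itself insufficient: the dual test space in $\|\cdot\|_{P_{-,k}}$ is $H^1_0(\Omega)\cap L^2_0(\Omega)$, so you need the test function realizing that inequality to be mean-free and $L^2$-bounded, which is a strictly stronger statement.

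The one concrete mechanism you do propose for the mean-zero constraint --- restoring the mean ``by a lower-order interior perturbation'' --- fails as stated. If you correct $f_0$ to $f_0-c\chi$ with $c:=\int_\Omega f_0\,\mbox{d}x$ and a fixed interior bump $\chi$, then in general $|c|\les\|p_k\|_{L^2(\Omega)}$ and $|(p_k,\chi)_{L^2(\Omega)}|\les\|p_k\|_{L^2(\Omega)}$, so the induced change $c\,(p_k,\chi)_{L^2(\Omega)}$ in the pairing is of order $\|p_k\|_{L^2(\Omega)}^2$ with uncontrolled sign --- the same order as the lower bound you must preserve; it is not a lower-order term. The paper sidesteps this by shifting $p_k$ rather than the test function: since $f=(p_k-a)\psi_k\in L^2_0(\Omega)$, one has $(p_k,f)_{L^2(\Omega)}=(p_k-a,(p_k-a)\psi_k)_{L^2(\Omega)}$, and at the end $\|p_k-a\|_{L^2(\Omega)}\ge\|p_k\|_{L^2(\Omega)}$ because $p_k$ itself has mean zero. (Incidentally, a genuine elementwise bubble construction $f_0:=\sum_{i} b_{\mathcal{T}_i}\,p_k|_{\mathcal{T}_i}$ would need no correction at all: for affine $p_k|_{\mathcal{T}_i}$ and the standard symmetric bubble one gets $\int_{\mathcal{T}_i} b_{\mathcal{T}_i} p_k\,\mbox{d}x = c_0\int_{\mathcal{T}_i}p_k\,\mbox{d}x$ with a fixed $c_0$, hence $\int_\Omega f_0\,\mbox{d}x=c_0\int_\Omega p_k\,\mbox{d}x=0$; but neither this observation nor any substitute for it appears in your sketch.) As it stands, the pressure half of your argument is an outline whose only proposed completion would not go through.
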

\begin{proof}
	First note that it suffices to show $\hnorm u_k\hnorm_{U,k} \les \|u_k\|_{U_{-,k}}$ and $\hnorm p_k\hnorm_{P,k} \les \|p_k\|_{P_{-,k}}$.
	We obtain using the definition of $\|\cdot\|_{U_{-,k}}$, the choice $\tilde{u}:=u_k \in [H^1_0(\Omega)]^2$ and a standard inverse inequality
	\begin{align*}
		&\|u_k\|_{U_{-,k}}^2  =  \nnsup{\tilde{u}}{[L^2(\Omega)]^2} \frac{(u_k,\tilde{u})_{L^2(\Omega)}^2 } {h_k^{2}\|\tilde{u}\|_{L^2(\Omega) + \beta^{-1} H^1_0(\Omega)}^2 }
				 \ges \frac{(u_k,u_k)_{L^2(\Omega)}^2 } {h_k^{2}\|u_k\|_{L^2(\Omega) + \beta^{-1} H^1_0(\Omega)}^2 }\\
				& = \frac{(u_k,u_k)_{L^2(\Omega)}^2 } {\inf_{w\in [H^1_0(\Omega)]^2} h_k^{2}\|w- u_k\|_{L^2(\Omega)}^2 + h_k^2 \|w\|_{\beta^{-1} H^1(\Omega)}^2 }\\
				& \ges \frac{(u_k,u_k)_{L^2(\Omega)}^2 } {\inf_{w_k\in U_k} h_k^{2}\|w_k- u_k\|_{L^2(\Omega)}^2 + h_k^2 \| w_k\|_{\beta^{-1} H^1(\Omega)}^2 }\\
				& \ges \frac{ (u_k,u_k)_{L^2(\Omega)}^2 } {\inf_{w_k\in U_k}  h_k^{2}\|w_k-u_k\|_{L^2(\Omega)}^2 + \|w_k\|_{\beta^{-1} L^2(\Omega)}^2 }
				 = (\beta+h_k^{-2}) \|u_k\|_{L^2(\Omega)}^2  = \hnorm u_k\hnorm_{U,k}^2.
	\end{align*}	
	
	For the proof of the inequality for the pressure variable,
	we define on each grid level a function $\psi_k: \Omega \rightarrow \mathbb{R}$ as follows. The
	function $\psi_k$ is $0$ on all vertices which are located on $\partial\Omega$,
	$1$ on all other vertices (interior vertices) and linear within each element (Courant element).

	In what follows, $\hat{P}_k$ is the space of functions obtained
	by discretizing the space $H^1(\Omega)$ using the standard Courant element. Note that -- as we have
	used the Taylor Hood element -- the identity $\hat{P}_k = \{p_k+a\;:\;p_k\in P_k\mbox{ and } a \in \mathbb{R}\}$ holds.	
	
	Note that we obtain using the definition of $\|\cdot\|_{P_{-,k}}$ and $q \subseteq L^2_0(\Omega)$ that
	\begin{align*}
		\|p_k\|_{P_{-,k}}^2 & = \hspace{-.1cm}  \nnsup{q}{H^1_0(\Omega)\cap L^2_0(\Omega)} \frac{(p_k,q)_{L^2(\Omega)}^2 } {h_k^{2}\|q\|_{H^1_0(\Omega)\cap \beta L^2(\Omega)}^2 }
			 = \hspace{-.1cm} \nnsup{q}{H^1_0(\Omega)\cap L^2_0(\Omega)} \frac{(p_k-a,q)_{L^2(\Omega)}^2 } {h_k^{2}\|q\|_{H^1_0(\Omega)\cap \beta L^2(\Omega)}^2 }
	\end{align*}
	is satisfied, where $a\in \mathbb{R}$ is chosen such that $ (p_k-a)\psi_k \in L^2_0(\Omega)$.
	Observe that $q:=(p_k-a)\psi_k \in H^1_0(\Omega)\cap L^2_0(\Omega)$. Using this choice, we obtain
	\begin{align*}
		\|p_k\|_{P_{-,k}}^2 & \ge \frac{(p_k-a,(p_k-a)\psi_k )_{L^2(\Omega)}^2 } {h_k^2\|(p_k-a)\psi_k\|^2_{H^1(\Omega)} + \beta h_k^2\|(p_k-a)\psi_k\|^2_{L^2(\Omega)} }.
	\end{align*}
	Using~$|\psi_k|\le 1$, we obtain $\|(p_k-a)\psi_k\|^2_{L^2(\Omega)}\le \|p_k-a\|^2_{L^2(\Omega)}$. The following two inequalities
	hold:
	\begin{align}\label{lem:equiv:aux2}
			\|\psi_k p_k\|_{H^1(\Omega)}^2 &\les h_k^{-2} \|p_k\|_{L^2(\Omega)}^2 \mbox{ and}\\
	 			\label{eq:kinv}
			(p_k,\psi_k p_k)_{L^2(\Omega)} &\ges (p_k,p_k)_{L^2(\Omega)}.
	\end{align}
	Proofs are given in the Appendix.
	Using these inequalities and $p_k\in L^2_0(\Omega)$, we obtain
	\begin{align*}
		\|p_k\|_{P_{-,k}}^2 & \ges \frac{(p_k-a,p_k-a)_{L^2(\Omega)}^2 } {\|p_k-a\|^2_{L^2(\Omega)} + \beta h_k^2\|p_k-a\|^2_{L^2(\Omega)} }\\
					& = (1 + \beta h_k^{2})^{-1} \|p_k-a\|_{L^2(\Omega)}^2
					 \ge (1 + \beta h_k^{2})^{-1} \|p_k\|_{L^2(\Omega)}^2  = \hnorm p_k\hnorm_{P,k}^2,
	\end{align*}
	which finishes the proof.	
\end{proof}

Using this lemma and the approximation property~\eqref{eq:apprp:raute}, we obtain 
the approximation property~\eqref{eq:apprp}. Together with the smoothing property, we obtain:
\begin{theorem}
        Assume that
        \begin{itemize}
                \item $\Omega$ is such that the regularity assumptions~\citecond{(R)} and~\citecond{(R1)} are satisfied,
                \item the problem is discretized using the Taylor-Hood element and
                \item one of the smoothers proposed in this paper is used.
        \end{itemize}
        Then
        \begin{equation*}
                \hnorm x_k^{(1)}-x_k^*\hnorm_k \le q(\nu) \hnorm x_k^{(0)}-x_k^*\hnorm_k
        \end{equation*}
        holds
        with $q(\nu):=C_S\, C_A\, \nu^{-1/2}$, where the constants~$C_A$ and~$C_S$ are independent of the grid
				level~$k$ and the choice of the parameter~$\beta$. For $\nu > (C_S\, C_A)^2$, the two-grid method converges 
				with convergence rate $q(\nu)<1$.
\end{theorem}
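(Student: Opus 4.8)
The plan is to assemble the final convergence estimate from the two pillars already established: the approximation property and the smoothing property. No genuinely new analysis is needed; the substantive work lives in Theorem~\ref{thrm:main} and in the two smoother lemmas, so the task here is to chain these estimates while tracking that every constant stays independent of the grid level~$k$ and of~$\beta$.

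First I would upgrade the approximation property from the norm~$\|\cdot\|_{X_{-,k}}$ to the norm~$\hnorm\cdot\hnorm_k$. Theorem~\ref{thrm:main} supplies~\eqref{eq:apprp:raute} under the standing hypotheses \citecond{(R)}, \citecond{(R1)} and Taylor--Hood discretization, while Lemma~\ref{lem:equiv} gives $\hnorm x_k\hnorm_k \les \|x_k\|_{X_{-,k}}$ for all $x_k\in X_k$. Applying the latter to the discrete error $x_k^{(1)}-x_k^*\in X_k$ bounds the left-hand side of~\eqref{eq:apprp} by $\|x_k^{(1)}-x_k^*\|_{X_{-,k}}$. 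Applying it instead to the test functions $\tilde{x}_k$ appearing in the denominator lets me replace $\|\tilde{x}_k\|_{X_{-,k}}$ by $\hnorm\tilde{x}_k\hnorm_k$; since each $\tilde{x}_k$ may be replaced by $-\tilde{x}_k$ without altering the supremum, the numerator can be taken nonnegative, so the ratio inequality survives passage to the supremum. This yields~\eqref{eq:apprp} with a constant $C_A$ depending only on $\tilde{C}_A$ and the hidden constant of Lemma~\ref{lem:equiv}, hence independent of $k$ and~$\beta$.

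Second I would invoke the smoothing property~\eqref{eq:smp}, which the two smoother lemmas establish with rate $\eta(\nu)=C_S\,\nu^{-1/2}$ and a constant $C_S$ independent of $k$ and $\beta$, for whichever of the normal equation smoother or the Uzawa type smoother is in force. Chaining~\eqref{eq:apprp} into~\eqref{eq:smp} gives
\begin{equation*}
\hnorm x_k^{(1)}-x_k^*\hnorm_k \le C_A\,\eta(\nu)\,\hnorm x_k^{(0)}-x_k^*\hnorm_k = C_S\,C_A\,\nu^{-1/2}\,\hnorm x_k^{(0)}-x_k^*\hnorm_k,
\end{equation*}
so $q(\nu)=C_S\,C_A\,\nu^{-1/2}$ is itself independent of $k$ and $\beta$. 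Solving $q(\nu)<1$ produces the threshold $\nu>(C_S\,C_A)^2$, which gives two-grid convergence.

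The proof carries no real obstacle, since all the hard estimates are already in place. The one point demanding care is the norm-transfer step: Lemma~\ref{lem:equiv} runs in a single direction, so I must apply it to the left-hand error term and to the denominator test functions consistently, and confirm that the one-sided bound $\hnorm\cdot\hnorm_k\les\|\cdot\|_{X_{-,k}}$ is exactly what is needed to \emph{strengthen}~\eqref{eq:apprp:raute} into~\eqref{eq:apprp} rather than weaken it. I would also double-check that the smoother lemmas and Theorem~\ref{thrm:main} share compatible hypotheses, so that their constants may legitimately be combined into a single $\beta$- and $k$-independent factor $q(\nu)$.
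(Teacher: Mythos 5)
Your proposal is correct and follows exactly the paper's own (largely implicit) argument: Lemma~\ref{lem:equiv} transfers the approximation property~\eqref{eq:apprp:raute} into the norm~$\hnorm\cdot\hnorm_k$, and chaining the resulting estimate~\eqref{eq:apprp} with the smoothing property~\eqref{eq:smp} (with $\eta(\nu)=C_S\,\nu^{-1/2}$) gives $q(\nu)=C_S\,C_A\,\nu^{-1/2}$ and the threshold $\nu>(C_S\,C_A)^2$, just as outlined in Section~\ref{sec:3}. Your extra care about the one-sided direction of the norm inequality and the sign of the numerator (via $\tilde{x}_k\mapsto-\tilde{x}_k$) is exactly the detail the paper leaves tacit.
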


The convergence of the W-cycle multigrid follows under weak assumptions, cf.~\cite{Hackbusch:1985}.

\section{Numerical Results}\label{sec:4}

In this section, we illustrate the convergence theory presented within
this paper with numerical results.
For the numerical experiments, the domain~$\Omega$ was chosen to be the unit square $\Omega:=(0,1)^2$.
As mentioned in Section~\ref{sec:2}, the weak inf-sup-condition~\eqref{eq:cond:s} 
can be shown for the Taylor-Hood element only if at least one vertex of each element
is in the interior of the domain~$\Omega$. As this is not satisfied for the the standard 
decomposition of the unit square into two triangular elements, we choose the coarsest 
grid level~$k=0$ to be a decomposition of the domain~$\Omega$ into $8$ triangles, 
as seen in Figure~\ref{fig:1}. The grid levels~$k=1,2,\ldots$ were constructed
by uniform refinement, i.e., every triangle was decomposed into four
subtriangles.
\begin{figure}[ht]%
\begin{center}
        \includegraphics[scale=.4]{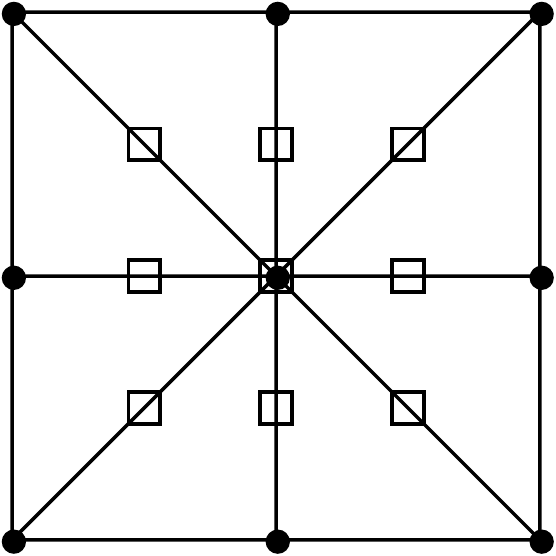}\qquad
        \includegraphics[scale=.4]{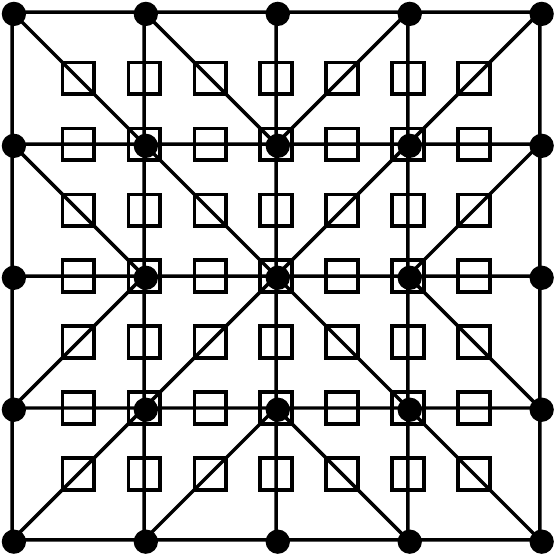}
        \caption{Discretization on grid levels $k=1$ and $k=2$, where the squares denote the degrees of freedom of (the components of) 
	the velocity field $u$ and the dots denote the degrees of freedom of pressure distribution $p$}
        \label{fig:1}        
\end{center}
\end{figure}%

The right-hand-side functions $f$ and $g$ have been chosen such that
the solution of the problem on grid level $k$ is the $L^2$-projection of the exact
solution 
\begin{equation*}
	u(\xi_1,\xi_2) := \phi(\xi_1,\xi_2) \left(\begin{array}{c}\xi_2-\tfrac12\\\tfrac12-\xi_1\end{array}\right)\qquad\mbox{and}\qquad p(\xi_1,\xi_2):=\phi(\xi_1,\xi_2),
\end{equation*} into $X_k$, where
	$\phi(\xi_1,\xi_2) := \max\left\{0,\min\left\{1,2-4\sqrt{(\xi_1-\tfrac12)^2+(\xi_2-\tfrac12)^2}\right\}\right\}$.

For solving the discretized problem, we have used the proposed W-cycle multigrid method. As mentioned
above, the matrix $\mcL_k$, introduced in Section~\ref{sec:3} can be replaced by any spectrally equivalent
matrix. As $\mcL_k$ should be easy-to-invert, $\mcL_k$ should be a diagonal matrix. The convergence theory indicates
how~$\mcL_k$ should be scaled, however only up to constants. Numerical experiments indicated that it is
reasonable to choose 
\begin{equation}\label{eq:numintroduc}
        \mcL_k :=
        \left(
                \begin{array}{cc}
                        \hat{A}_k \\ & \hat{S}_k
                \end{array}
        \right), \quad\mbox{where}\quad \hat{A}_k:=\mbox{diag } A_k \quad\mbox{and}\quad \hat{S}_k:= \mbox{diag } B_k\hat{A}_k^{-1}B_k^T,
\end{equation}
i.e., to choose these constants as they appear naturally in the diagonal of~$A_k$, the (1,1)-block of $\mathcal{A}_k$, and in the
diagonal of the inexact Schur complement. As this choice of $\mcL_k$ is spectrally equivalent to the matrix
$\mcL_k$, introduced in Section~\ref{sec:3}, this choice is still covered by the convergence theory.
For this choice, the damping parameter was chosen to be~$\tau=0.35$, for all grid levels~$k$ and all choices of~$\beta$.

For the Uzawa type smoother, the matrices $\hat{A}_k$ and
$\hat{S}_k$ have been chosen as introduced in~\eqref{eq:numintroduc} and the damping parameters have been chosen
to be $\tau= \sigma = 0.8$ for all grid levels~$k$ and all choices of~$\beta$. Again, the choice of this section
is covered by the convergence theory.

The number of iterations and the convergence rate were measured as
follows: we start with $\ul{x}_k^{(0)}$ and measure the reduction of the error in each step
using the norm $\hnorm \cdot \hnorm_k$. The iteration was stopped when
the initial error was reduced by a factor of $\epsilon = 10^{-9}$. The convergence rate
$q$ is the mean convergence rate in this iteration, i.e.,
\begin{equation*}
        q = \left(\frac{\hnorm x_k^{(n)}-x_k^* \hnorm_k}{\hnorm x_k^{(0)}-x_k^*\hnorm_k}\right)^{1/n},
\end{equation*}
where $n$ is the number of iterations needed to reach the stopping
criterion. Here, $x_k^*$ is the exact solution and $x_k^{(i)}$ is the $i$-th iterate.


\begin{table}[ht]%
\begin{center}
  \begin{tabular}{p{0.30cm}p{1.0cm}p{0.30cm}p{1.0cm}p{0.30cm}p{1.0cm}p{0.30cm}p{1.0cm}p{0.30cm}p{1.0cm}p{0.30cm}p{1.0cm}}
  \hline\noalign{\smallskip}
     \multicolumn{2}{l}{$\nu=1+1$} &
		 \multicolumn{2}{l}{$\nu=2+2$} &
		 \multicolumn{2}{l}{$\nu=3+3$} &
	   \multicolumn{2}{l}{$\nu=4+4$} &
     \multicolumn{2}{l}{$\nu=8+8$} &
     \multicolumn{2}{l}{$\nu=16+16$} \\  
  \hline\noalign{\smallskip}
       $n$ & $q$& $n$ & $q$& $n$ & $q$& $n$ & $q$& $n$ & $q$& $n$ & $q$\\
        \noalign{\smallskip}\hline\noalign{\smallskip}
		\multicolumn{12}{l}{Normal equation smoother}\\
        \noalign{\smallskip}\hline\noalign{\smallskip}
	         $88$&$0.789$&$46$&$0.634$&$30$&$0.496$&$24$&$0.412$&$17$&$0.293$&$11$&$0.148$\\
        \noalign{\smallskip}\hline\noalign{\smallskip}
		\multicolumn{12}{l}{Uzawa type smoother}\\
        \noalign{\smallskip}\hline\noalign{\smallskip}
	       \multicolumn{2}{l}{divergent} & \multicolumn{2}{l}{divergent} &$14$&$0.212$&$20$&$0.347$&$ 6$&$0.031$&$ 5$&$0.008$\\
          \noalign{\smallskip}\hline\noalign{\smallskip}
    \multicolumn{12}{l}{Asymptotic decay of the convergence rate predicted by theory ($\nu^{-1/2}$)}\\
    \noalign{\smallskip}\hline\noalign{\smallskip}
     &$0.707$&&$0.500$&&$0.408$&&$0.353$&&$0.250$&&$0.177$\\  
     \noalign{\smallskip}\hline\noalign{\smallskip}
        \end{tabular}
        \caption{Number of iterations $n$ and mean convergence rate $q$ for the normal equation smoother
	and the Uzawa type smoother depending with $\nu=\nu_{pre}+\nu_{post}$ smoothing steps on grid level $k=4$ for $\beta=1$}
        \label{tab:0}        
\end{center}
\end{table}%

\begin{table}[ht]%
\begin{center}
        \begin{tabular}{lp{0.26cm}p{0.85cm}p{0.26cm}p{0.85cm}p{0.26cm}p{0.85cm}p{0.26cm}p{0.85cm}p{0.26cm}p{0.85cm}p{0.26cm}p{0.85cm}}
        \hline\noalign{\smallskip}
               & \multicolumn{2}{l}{$\beta=0$} &
		             \multicolumn{2}{l}{$\beta=10^{2}$} &
                 \multicolumn{2}{l}{$\beta=10^{4}$} &
                 \multicolumn{2}{l}{$\beta=10^{6}$} &
                 \multicolumn{2}{l}{$\beta=10^{8}$} &
                 \multicolumn{2}{l}{$\beta=10^{10}$} \\
               & $n$ & $q$& $n$ & $q$& $n$ & $q$& $n$ & $q$& $n$ & $q$& $n$ & $q$\\
        \noalign{\smallskip}\hline\noalign{\smallskip}
		$k=4$&30&0.496&30&0.493&30&0.496&68&0.736&71&0.745&71&0.745\\
		$k=5$&29&0.489&29&0.488&22&0.388&64&0.722&70&0.744&71&0.745\\
		$k=6$&29&0.484&29&0.486&27&0.457&52&0.670&70&0.743&71&0.745\\
		$k=7$&28&0.475&28&0.475&28&0.470&35&0.553&70&0.742&71&0.745\\
		$k=8$&28&0.469&28&0.469&28&0.468&20&0.347&67&0.732&71&0.746\\
        \noalign{\smallskip}\hline\noalign{\smallskip}
        \end{tabular}
        \caption{Number of iterations $n$ and mean convergence rate $q$ for the normal equation smoother with $\nu=3+3$ smoothing steps}
        \label{tab:1}        
\end{center}
\end{table}%

\begin{table}[ht]%
\begin{center}
        \begin{tabular}{lp{0.26cm}p{0.85cm}p{0.26cm}p{0.85cm}p{0.26cm}p{0.85cm}p{0.26cm}p{0.85cm}p{0.26cm}p{0.85cm}p{0.26cm}p{0.85cm}}
        \hline\noalign{\smallskip}
               & \multicolumn{2}{l}{$\beta=0$} &
		             \multicolumn{2}{l}{$\beta=10^{2}$} &
                 \multicolumn{2}{l}{$\beta=10^{4}$} &
                 \multicolumn{2}{l}{$\beta=10^{6}$} &
                 \multicolumn{2}{l}{$\beta=10^{8}$} &
                 \multicolumn{2}{l}{$\beta=10^{10}$} \\
               & $n$ & $q$& $n$ & $q$& $n$ & $q$& $n$ & $q$& $n$ & $q$& $n$ & $q$\\
        \noalign{\smallskip}\hline\noalign{\smallskip}
		$k=4$&14&0.212&13&0.201& 6&0.030& 7&0.051& 8&0.059& 8&0.059\\
		$k=5$&13&0.194&13&0.193& 9&0.095& 7&0.043& 7&0.050& 7&0.050\\
		$k=6$&12&0.176&12&0.176&11&0.145& 6&0.024& 7&0.047& 7&0.048\\
		$k=7$&12&0.166&12&0.166&11&0.150& 5&0.013& 7&0.044& 7&0.045\\
		$k=8$&11&0.147&11&0.147&11&0.145& 8&0.058& 7&0.038& 7&0.043\\
        \noalign{\smallskip}\hline\noalign{\smallskip}
        \end{tabular}
        \caption{Number of iterations $n$ and mean convergence rate $q$ for the Uzawa type smoother with $\nu=3+3$ smoothing steps}
        \label{tab:2}        
\end{center}
\end{table}%

In Table~\ref{tab:0} we compare for a fixed grid level (level~$k=4$) and the fixed choice
$\beta=1$ the convergence rates for several choices of $\nu$, the number of pre- and
post-smoothing steps. We see that the convergence rate behaves approximately like~$\nu^{-1/2}$,
if the number of smoothing steps is increased. This is consistent with the theory which
guarantees the convergence rate being bounded by $C\,\nu^{-1/2}$. We observe that the preconditioned
normal equation smoother already converges for $\nu=1+1$ smoothing steps, while
for the Uzawa type smoother $\nu=3+3$ smoothing steps are necessary.

In Tables~\ref{tab:1} and \ref{tab:2} we compare various grid levels~$k$ and choices of the parameter~$\beta$.
We have used a fixed choice of $\nu=3+3$ smoothing 
steps. First we observe that, for both smoothers, the number of iterations seems to be well-bounded 
for all grid levels~$k$ which yields an optimal convergence behavior. Moreover, we see that the number 
of iterations is also well-bounded for a wide range of choices of the parameter~$\beta$, i.e., we observe
also robust convergence as predicted by the convergence theory.
Comparing both kinds of smoothers, we see that the Uzawa type smoother leads to much faster convergence
rates than the preconditioned normal equation smoother. Note that moreover the computational complexity of the
Uzawa type smoother (per iteration) is slightly smaller than the complexity of the normal equation smoother.

It has to be mentioned that for the model problem, also the V-cycle multigrid method
converges with rates comparable to the convergence rates of the W-cycle multigrid method. However,
the V-cycle is not covered by the convergence theory.

The numerical experiments done by the author have shown that the convergence rates can be
improved slightly by adjusting the choice of the parameters to the grid levels and the choice of~$\beta$.
However, the main goal of this paper is to show that the proposed method also works well for
fixed choices of the parameter.

\section{Conclusions and Further Work}\label{sec:5}

In the present paper we have proposed a coupled multigrid solver for the generalized
Stokes problem where the smoothing property is shown in the scaled $L^2$-norm 
${\hnorm\cdot\hnorm_k}$. This allows to construct a multigrid method where the smoother
is a simple linear iteration (which consists only of divisions and the multiplication of vectors
with the system matrix~$\mcA_k$). In the present paper, a preconditioned 
normal equation smoother and an Uzawa type smoother have been chosen but it 
seems possible to find also other smoothers
which satisfy the smoothing property in the norm~${\hnorm\cdot\hnorm_k}$.

The convergence rates observed for the multigrid method proposed in the present paper are comparable with
the rates observed for the methods proposed in~\cite{Olshanskii:2012}. Note that for applying the methods 
proposed in the named paper, it is necessary to solve a Poisson problem in each smoothing step. This is not
needed for the method proposed in the present paper. The main contribution of this paper is a new way
of setting up the norms where the convergence is sown in. The technique of the convergence proof that
has been applied in the present paper is also extendable to the Stokes control problem, cf.~\cite{Takacs:2013a}. 

\textbf{Acknowledgments.} The author thanks Markus Kollmann for providing code used to compute 
the numerical results presented in this paper. Moreover, the support of the Austrian Science Funds (FWF), the
University of Oxford and the Technische Universit\"at Chemnitz are gratefully acknowledged. The author
thanks the referee for many helpful suggestions.

\section{Appendix}
\def\proof{\emph{Proof of Theorem~\ref{thrm:a3}.}}
\begin{proof}
  Note that it suffices to show approximation error results for the 
	individual variables separately. Using a standard interpolation operator 
	$\Pi_k:[H^2(\Omega)]^2\rightarrow U_k$, we obtain
  \begin{equation*}
		\|u - \Pi_k u\|_{L^2(\Omega)}^2 \les h_k^2 \|u \|_{H^1(\Omega)}^2
		\quad\mbox{and}\quad
		\|u - \Pi_k u\|_{H^1(\Omega)}^2 \les h_k^2 \|u \|_{H^2(\Omega)}^2
  \end{equation*}
	for all $u\in [H^2(\Omega)]^2$ and therefore
  \begin{align*}
		& \inf_{u_k\in U_k} \|u-u_k\|_{U}^2 \le \|u-\Pi_ku\|_{U}^2
                =\|u-\Pi_ku\|_{H^1(\Omega)}^2+\beta \|u-\Pi_ku\|_{L^2(\Omega)}^2
		\\&\qquad \les h_k^2 \left(\|u\|_{H^2(\Omega)}^2
		+\beta \|u\|_{H^1(\Omega)}^2\right)
                =  \|u\|_{U_{+,k}}^2.
  \end{align*}
	A similar argument is possible for the pressure distribution. The standard approximation error estimates
  \begin{equation*}
		\inf_{p_k\in P_k} \|p - p_k\|_{L^2(\Omega)}^2 \les h_k^2 \|p\|_{H^1(\Omega)}^2
		\quad \mbox{and} \quad
		\inf_{p_k\in P_k} \|p - p_k\|_{H^1(\Omega)}^2 \les h_k^2 \|p\|_{H^2(\Omega)}^2
  \end{equation*}
	hold for all $p\in H^1(\Omega)$ or $p\in H^2(\Omega)$, respectively. They imply
  \begin{align*}
                &\inf_{p_k\in P_k} \|p-p_k\|_P^2
			=\hspace{-.5em}\inf_{\substack{p_1\in L^2(\Omega)\\p_2\in H^1(\Omega)\\p_1+p_2=p}}
			\inf_{p_{1,k}\in P_k} \|p_1-p_{1,k}\|_{ L^2(\Omega)}^2
			+\inf_{p_{2,k}\in P_k} \|p_2-p_{2,k}\|_{\beta^{-1} H^1(\Omega)}^2\\
			&\quad\le
			\inf_{\substack{p_1\in H^1(\Omega)\\p_2\in H^2(\Omega)\\p_1+p_2=p}}
			\inf_{p_{1,k}\in P_k} \|p_1-p_{1,k}\|_{ L^2(\Omega)}^2
			+\inf_{p_{2,k}\in P_k} \|p_2-p_{2,k}\|_{\beta^{-1} H^1(\Omega)}^2\\
			&\quad\les h_k^2
			\inf_{\substack{p_1\in H^1(\Omega)\\p_2\in H^2(\Omega)\\p_1+p_2=p}}
			\|p_1\|_{ H^1(\Omega)}^2
			+ \|p_2\|_{\beta^{-1} H^2(\Omega)}^2
		= h_k^2 \|p\|_{ H^1(\Omega)+\beta^{-1} H^2(\Omega)}^2,
        \end{align*}
	for all $p\in H^1(\Omega) \cap L^2_0(\Omega)$, which finishes the proof.
\end{proof}

\def\proof{\emph{Proof of Lemma~\ref{lem:aux1}.}}
\begin{proof}
	Let $\mcF(\tilde{u},\tilde{p}):= (f,\tilde{u})_{L^2(\Omega)}+(g,\tilde{p})_{L^2(\Omega)}$
	for $f \in [L^2(\Omega)]^2$ and $g \in H^1_0(\Omega)\cap L^2_0(\Omega)$.
	Choose $f_2 \in [H^1_0(\Omega)]^2$ arbitrarily and define $f_1:=f-f_2\in [L^2(\Omega)]^2$.
	Observe that
	\begin{equation*}
		(p_{\mcF},\nabla\cdot \tilde{u})_{L^2(\Omega)}
		= - (\nabla u_{\mcF},\nabla \tilde{u})_{L^2(\Omega)} 
		- \beta (u_{\mcF},\tilde{u})_{L^2(\Omega)} 
		+ (f_1+f_2,\tilde{u})_{L^2(\Omega)}
	\end{equation*}
	is satisfied for all $\tilde{u} \in [H^1_0(\Omega)]^2$. This is equivalent to
	\begin{equation*}
		-(\nabla p_{\mcF}, \tilde{u})_{L^2(\Omega)} 
		=  (\Delta u_{\mcF}, \tilde{u})_{L^2(\Omega)} 
		- \beta (u_{\mcF},\tilde{u})_{L^2(\Omega)} 
		+ (f_1+f_2,\tilde{u})_{L^2(\Omega)}
	\end{equation*}
	for all $\tilde{u} \in [H^1_0(\Omega)]^2$. Note that 
	$\nabla p_{\mcF}$,
	$\Delta u_{\mcF}$, $u_{\mcF}$, $f_1$ and $f_2$ are in 
	$[L^2(\Omega)]^2$. Therefore, the above statement holds for all 
	$\tilde{u} \in [L^2(\Omega)]^2$ (because
	$[H^1_0(\Omega)]^2$ is dense in $[L^2(\Omega)]^2$), particularly
	for $\tilde{u}:=\nabla \tilde{p}$,
	where $\tilde{p} \in H^1(\Omega) \cap L^2_0(\Omega)$. So, we obtain
	\begin{equation}\label{eq:lemaux0}
		-(\nabla p_{\mcF}, \nabla \tilde{p})_{L^2(\Omega)} 
		=  (\Delta u_{\mcF}, \nabla \tilde{p})_{L^2(\Omega)} 
		- \beta (u_{\mcF},\nabla \tilde{p})_{L^2(\Omega)} 
		+ (f_1+f_2,\nabla \tilde{p})_{L^2(\Omega)}
	\end{equation}
	for all $\tilde{p} \in  H^1(\Omega) \cap L^2_0(\Omega)$.

	Let $p_1 \in H^1(\Omega) \cap L^2_0(\Omega)$ be such that 
	\begin{equation}\label{eq:lemaux1}
		-(\nabla p_1,\nabla \tilde{p})_{L^2(\Omega)} = 
		  (\Delta u_{\mcF},\nabla \tilde{p})_{L^2(\Omega)} 
		+ (f_1,\nabla \tilde{p})_{L^2(\Omega)}
	\end{equation}
	holds for all $\tilde{p} \in  H^1(\Omega) \cap L^2_0(\Omega)$. 

	Note that $\Delta u_{\mcF} + f_1 \in [L^2(\Omega)]^2$ 
	and therefore the right-hand-side is a functional
	in $[H^1_0(\Omega)]^*$. So, existence and uniqueness 
	of $p_1 \in H^1(\Omega)\cap L^2_0(\Omega)$ is guaranteed. Using the choice $\tilde{p}:=p_1$, we obtain
	\begin{equation}\nonumber
		\| \nabla p_1\|_{L^2(\Omega)} \le \|\Delta u_{\mcF}\|_{L^2(\Omega)} 
		+ \|f_1\|_{L^2(\Omega)}
			\le \| u_{\mcF}\|_{H^2(\Omega)} + \|f_1\|_{L^2(\Omega)}.
	\end{equation}
	Using Poincar\'e's inequality, we obtain further
	\begin{equation}\label{eq:lemaux:est1}
		\| p_1\|_{H^1(\Omega)} \les \| u_{\mcF}\|_{H^2(\Omega)} + \|f_1\|_{L^2(\Omega)}.
	\end{equation}
	Let $p_2\in H^1(\Omega) \cap L^2_0(\Omega)$ be such that
	\begin{equation}\nonumber
		-(\nabla p_2, \nabla \tilde{p})_{L^2(\Omega)} 
		= - \beta ( u_{\mcF}, \nabla \tilde{p} )_{L^2(\Omega)} 
		+ ( f_2, \nabla \tilde{p})_{L^2(\Omega)}
	\end{equation}
	for all $\tilde{p} \in H^1(\Omega) \cap L^2_0(\Omega)$.
	This implies, as $u_{\mcF}\in [H^1_0(\Omega)]^2$
		and $f_2\in [H^1_0(\Omega)]^2$, that
	\begin{equation}\label{eq:lemaux2}
		-(\nabla p_2, \nabla \tilde{p})_{L^2(\Omega)} 
		= \beta (\nabla \cdot u_{\mcF}, \tilde{p} )_{L^2(\Omega)} 
		- (\nabla\cdot f_2, \tilde{p})_{L^2(\Omega)}
	\end{equation}
	holds. As $\beta \nabla\cdot u_{\mcF}-\nabla\cdot f_2\in L^2(\Omega)$,
	existence and uniqueness of $p_2$ is guaranteed.
	Condition~\citecond{(R1)} implies moreover $p_2\in H^2(\Omega)$ and
	\begin{equation}\label{eq:lemaux:est2}
		\|p_2\|_{H^2(\Omega)}^2 
		\les \beta^{2} \|\nabla \cdot u_{\mcF}\|_{L^2(\Omega)}^2
		  + \|\nabla\cdot f_2\|_{L^2(\Omega)}^2
		\les \beta^{2} \|u_{\mcF}\|_{H^1(\Omega)}^2 + \|f_2\|_{H^1(\Omega)}^2.
	\end{equation}
	Note that from~\eqref{eq:lemaux0}, \eqref{eq:lemaux1} 
	and \eqref{eq:lemaux2}, we obtain
	\begin{equation*}
		(\nabla (p_1+p_2),\nabla \tilde{p})_{L^2(\Omega)} =
			(\nabla p_{\mcF},\nabla \tilde{p})_{L^2(\Omega)}
	\end{equation*}
	is satisfied for all $\tilde{p} \in H^1(\Omega)\cap 
	L^2_0(\Omega)$, which implies (because $p_{\mcF}\in L^2_0(\Omega)$ and $p_1+p_2\in L^2_0(\Omega)$) 
	that $p_{\mcF}=p_1+p_2$ is satisfied.
	So, we have using~\eqref{eq:lemaux:est1} and~\eqref{eq:lemaux:est2}
	\begin{align*}
		&\|p_{\mcF}\|_{P_{+,k}}^2
		=h_k^2 \|p_{\mcF}\|_{H^1(\Omega)+\beta^{-1} H^2(\Omega)}^2 \\
			&\qquad \le \inf_{\substack{p=q_1+q_2,\\q_1\in H^1(\Omega)\cap L^2_0(\Omega)\\
					q_2\in H^2(\Omega) \cap L^2_0(\Omega)} } h_k^2 \|q_1\|_{H^1(\Omega)}^2 +h_k^2\|q_2\|_{\beta^{-1} H^2(\Omega)}^2 \\
			&\qquad \le h_k^2 \|p_1\|_{H^1(\Omega)}^2 +h_k^2\|p_2\|_{\beta^{-1} H^2(\Omega)}^2	\\
			&\qquad \les h_k^2 \|u_{\mcF}\|_{H^2(\Omega)}^2 +h_k^2\|u_{\mcF}\|_{\beta H^1(\Omega)}^2 +h_k^2\|f_1 \|_{L^2(\Omega)}^2+h_k^2\|f_2 \|_{\beta^{-1} H^1(\Omega)}^2\\
			&\qquad = h_k^2 \|u_{\mcF}\|_{H^2(\Omega)\cap \beta H^1(\Omega)}^2 +h_k^2 \|f_1 \|_{L^2(\Omega)}^2+h_k^2 \|f_2 \|_{\beta^{-1} H^1(\Omega)}^2.
	\end{align*}
	As $f_2\in [H^1_0(\Omega)]^2$ was chosen arbitrarily, we can take the infimum
	over all $f_2$, which finishes the proof.
\end{proof}

\def\proof{\emph{Proof of Lemma~\ref{lem:aux2}.}}
\begin{proof}
	As $H^1_0(\Omega)$ is dense in $L^2(\Omega)$, for $u\in [H^2(\Omega)]^2$ the function $-\Delta u \in [L^2(\Omega)]^2$
	can be approximated by some function $w^{\epsilon} \in [H^1_0(\Omega)]^2$ such that
	$	\|(-\Delta) u - w^{\epsilon}\|_{L^2(\Omega)}^2 \le \epsilon$.
	So, we can introduce an operator $-\Delta^{\epsilon}:[H^2(\Omega)]^2\rightarrow [H^1_0(\Omega)]^2$, which
	approximates $-\Delta u$ in $[H^1_0(\Omega)]^2$, satisfying the error estimate
	\begin{equation*}
		\|(-\Delta) u - (-\Delta^{\epsilon}) u \|_{L^2(\Omega)}^2 \le \epsilon
	\end{equation*}
	for all $u \in H^2(\Omega)$. So, $-\Delta^{\epsilon}u$ approaches $-\Delta u$
	in the $L^2$-sense for $\epsilon\rightarrow0$. (As $-\Delta u\not\in [H^1_0(\Omega)]^2$ in general,
	$\|-\Delta^{\epsilon} u\|_{H^1(\Omega)}$ is not convergent for $\epsilon\rightarrow0$.)
	
	Analogously, we introduce the operator $\nabla^{\epsilon}:H^1(\Omega)\rightarrow [H^1_0(\Omega)]^2$, which
	approximates $\nabla p$ in $[H^1_0(\Omega)]^2$, satisfying the error estimate
	\begin{equation*}
		\|\nabla p - \nabla^{\epsilon}p \|_{L^2(\Omega)}^2 \le \epsilon
	\end{equation*}
	for all $p\in H^1(\Omega)$. So, $\nabla^{\epsilon}p$ approaches $\nabla p$
	in the $L^2$-sense for $\epsilon\rightarrow0$. (As $\nabla p\not\in [H^1_0(\Omega)]^2$ in general,
	$\|\nabla^{\epsilon} p\|_{H^1(\Omega)}$ is not convergent for $\epsilon\rightarrow0$.)
	
	The idea of this proof is to show that for all $\epsilon>0$ there
	is some $\tilde{x}^{\epsilon}\in X$ such that
	\begin{align}
		\mcB(x_{\mcF},\tilde{x}^{\epsilon})&\ges h_k^{-2} \|u_{\mcF}\|_{U_{+,k}}^2 
			- \epsilon (1+\beta^{1/2}+\beta) h_k^{-1} \|x_{\mcF}\|_{X_{+,k}} -\epsilon^2 \mbox{ and} \label{eq:aux2a}\\
		\mcF(\tilde{x}^{\epsilon})&\les h_k^{-2} \|\mcF\|_{[X_{-,k}]^*} 
							\|x_{\mcF}\|_{X_{+,k}} + \epsilon (1+\beta^{1/2}) h_k^{-1} 
							\|\mcF\|_{(X_{-,k})^*}+\epsilon^2 \label{eq:aux2}
	\end{align}
	holds. As $\mcB(x_{\mcF},\tilde{x}^{\epsilon})=\mcF(\tilde{x}^{\epsilon})$, the statement of the Lemma follows for $\epsilon\rightarrow0$. 

	In the following, we show~\eqref{eq:aux2a} and~\eqref{eq:aux2} for
	$\tilde{x}^{\epsilon}:=(-\Delta^{\epsilon} u_{\mcF},\nabla\cdot\nabla^{\epsilon} p_{\mcF})$. 
	First, we show~\eqref{eq:aux2a}. Here, we estimate the summands of $\mcB(x_{\mcF},\tilde{x}^{\epsilon})$ separately. We have
	\begin{align*}
		&(\nabla u_{\mcF},\nabla (-\Delta^{\epsilon}) u_{\mcF})_{L^2(\Omega)} 
		 = (\Delta u_{\mcF}, \Delta^{\epsilon} u_{\mcF})_{L^2(\Omega)}  \\
		&\quad \ge  (\Delta u_{\mcF}, \Delta u_{\mcF})_{L^2(\Omega)} - \epsilon \|\Delta u_{\mcF}\|_{L^2(\Omega)}
		\ge  \|u_{\mcF}\|_{H^2(\Omega)}^2-\epsilon \beta^{1/2} h_k^{-1} \| x_{\mcF}\|_{X_{+,k}},
	\end{align*}
	where we use for the first step that $\Delta^{\epsilon}$ maps into $H^1_0(\Omega)$. For the second
	step, we use the Cauchy-Schwarz inequality and the upper bound for $\|-\Delta u_{\mcF}-(-\Delta^{\epsilon}) u_{\mcF}\|_{L^2(\Omega)}$.
	Friedrichs' inequality is used for the third step. Using similar arguments, we obtain for the
	next summand
	\begin{align*}
		&\beta(u_{\mcF},-\Delta^{\epsilon} u_{\mcF})_{L^2(\Omega)}	
			\ge \beta(u_{\mcF},-\Delta u_{\mcF})_{L^2(\Omega)} - \epsilon \beta\|u_{\mcF}\|_{L^2(\Omega)}\\
			&\qquad =  \beta(\nabla u_{\mcF},\nabla u_{\mcF})_{L^2(\Omega)} - \epsilon \beta\|u_{\mcF}\|_{L^2(\Omega)}
			\ges  \beta\|u_{\mcF}\|_{H^1(\Omega)}^2 - \epsilon \beta h_k^{-1} \|x_{\mcF}\|_{X_{+,k}}.
	\end{align*}
	We obtain for the last two summands, corresponding to the (2,1)- and the (1,2)-block, 
	again using similar arguments, including the bound for $\|\nabla p_{\mcF}-\nabla^{\epsilon} p_{\mcF}\|_{L^2(\Omega)}$, that
	\begin{equation*}
	\begin{aligned}
		&(\nabla\cdot u_{\mcF}, \nabla\cdot\nabla^{\epsilon} p_{\mcF})_{L^2(\Omega)} + (\nabla\cdot(-\Delta^{\epsilon}) u_{\mcF},p_{\mcF})_{L^2(\Omega)}\\  
			&\quad= -(\nabla \nabla\cdot u_{\mcF},\nabla^{\epsilon}p_{\mcF})_{L^2(\Omega)}+(\Delta^{\epsilon}u_{\mcF},\nabla p_{\mcF})_{L^2(\Omega)} \\
			&\quad\ge - (\nabla \nabla\cdot u_{\mcF},\nabla^{\epsilon}p_{\mcF})_{L^2(\Omega)} + (\Delta^{\epsilon}u_{\mcF},\nabla^{\epsilon} p_{\mcF})_{L^2(\Omega)}- \epsilon \|\Delta^{\epsilon} u_{\mcF}\|_{L^2(\Omega)} 
	\end{aligned}	
	\end{equation*}		
	holds. Using integration by parts, the Cauchy-Schwarz inequality and the upper bound
	for $\|-\Delta u_{\mcF}-(-\Delta^{\epsilon}) u_{\mcF}\|_{L^2(\Omega)}$, we obtain further
	\begin{equation*}
	\begin{aligned}
		&(\nabla\cdot u_{\mcF}, \nabla\cdot\nabla^{\epsilon} p_{\mcF})_{L^2(\Omega)} + (\nabla\cdot(-\Delta^{\epsilon}) u_{\mcF},p_{\mcF})_{L^2(\Omega)}\\  
			&\quad\ge (\nabla u_{\mcF},\nabla\nabla^{\epsilon}p_{\mcF})_{L^2} + (\Delta u_{\mcF},\nabla^{\epsilon} p_{\mcF})_{L^2}- \epsilon (\|\Delta u_{\mcF}\|_{L^2} +\|\nabla^{\epsilon} p_{\mcF}\|_{L^2}+\epsilon) \\
			&\quad= \underbrace{(\nabla u_{\mcF},\nabla\nabla^{\epsilon}p_{\mcF})_{L^2} - (\nabla u_{\mcF},\nabla \nabla^{\epsilon} p_{\mcF})_{L^2}}_{\displaystyle =0} - \epsilon (\|\Delta u_{\mcF}\|_{L^2} +\|\nabla^{\epsilon} p_{\mcF}\|_{L^2}+\epsilon).
	\end{aligned}	
	\end{equation*}					
	Finally, we obtain using the upper bound for $\|\nabla p_{\mcF}-\nabla^{\epsilon} p_{\mcF}\|_{L^2(\Omega)}$ and the definition of $\|\cdot\|_{X_{+,k}}$
	that
	\begin{equation*}
	\begin{aligned}
			&(\nabla\cdot u_{\mcF}, \nabla\cdot\nabla^{\epsilon} p_{\mcF})_{L^2(\Omega)} + (\nabla\cdot(-\Delta^{\epsilon}) u_{\mcF},p_{\mcF})_{L^2(\Omega)}\\  	
			&\quad\ge - \epsilon (\|\Delta u_{\mcF}\|_{L^2(\Omega)} +\|\nabla p_{\mcF}\|_{L^2(\Omega)}+\epsilon)\ge  -\epsilon (\|u_{\mcF}\|_{H^2(\Omega)} +\|p_{\mcF}\|_{H^1(\Omega)}+\epsilon)\\	
			&\quad\ge -  \epsilon (1+\beta^{1/2}) h_k^{-1} \|x_{\mcF}\|_{X_{+,k}} - \epsilon^2
	\end{aligned}
	\end{equation*}
	holds. So, we have shown~\eqref{eq:aux2a}.
	The next step is to show~\eqref{eq:aux2}. Let $\mcF(\tilde{u},\tilde{p}):= (f,\tilde{u})_{L^2(\Omega)}+(g,\tilde{p})_{L^2(\Omega)}$
	for $f \in [L^2(\Omega)]^2$ and $g \in H^1_0(\Omega)\cap L^2_0(\Omega)$. 
	Let $f_2\in [H^1_0(\Omega)]^2$ and $f_1:=f-f_2$. Using the same arguments as above,
	\begin{equation*}
		(f_1,-\Delta^{\epsilon} u_{\mcF})_{L^2(\Omega)}
			\les
			\| f_1\|_{L^2(\Omega)} \|u_{\mcF}\|_{H^2(\Omega)} + \epsilon\| f_1\|_{ L^2(\Omega)}
	\end{equation*}
	holds as well as
	\begin{align*}
		&(f_2,-\Delta^{\epsilon} u_{\mcF})_{L^2(\Omega)}
		\les (\nabla f_2,\nabla u_{\mcF})_{L^2(\Omega)}+ \epsilon\| f_2\|_{L^2(\Omega)}\\
		&\qquad	\les
			\| f_2\|_{\beta^{-1} H^1(\Omega)} \|u_{\mcF}\|_{\beta H^1(\Omega)} + \epsilon\beta^{1/2} \| f_2\|_{\beta^{-1} H^1(\Omega)}.
	\end{align*}
	This implies
	\begin{align*}
		&(f,-\Delta^{\epsilon} u_{\mcF})_{L^2(\Omega)}\\
		&\qquad\les \|f\|_{L^2(\Omega)+\beta^{-1} H^1_0(\Omega)} 
			\|u_{\mcF}\|_{H^2(\Omega)\cap \beta H^1(\Omega)} + \epsilon (1+\beta^{1/2})
				\|f\|_{L^2(\Omega)+\beta^{-1} H^1_0(\Omega)}\\
		&\qquad\les \|f\|_{L^2(\Omega)+\beta^{-1} H^1_0(\Omega)} 
			\|u_{\mcF}\|_{H^2(\Omega)\cap \beta H^1(\Omega)} + \epsilon h_k^{-1} (1+\beta^{1/2})
				\|\mcF\|_{[X_{-,k}]^*}.
	\end{align*}
	Let $p_2\in H^2(\Omega)$ and $p_1:=p-p_2$.
	We have
	\begin{equation*}
		(g,\nabla\cdot\nabla^{\epsilon} p_1)_{L^2(\Omega)} =
			-(\nabla g,\nabla^{\epsilon} p_1)_{L^2(\Omega)}
			\les \|g\|_{H^1(\Omega) }
					\|p_1\|_{ H^1(\Omega) } + \epsilon \|g\|_{H^1(\Omega)}.
	\end{equation*}
	Using $g\in H^1_0(\Omega)$, we have also
	\begin{align*}
		&(g,\nabla\cdot\nabla^{\epsilon} p_2)_{L^2(\Omega)} 
		=	- (\nabla g,\nabla^{\epsilon} p_2)_{L^2(\Omega)} 
		\les	-(\nabla g,\nabla p_2)_{L^2(\Omega)} + \epsilon \|g\|_{H^1(\Omega)}\\
		&\quad =( g,\nabla \cdot \nabla p_2)_{L^2(\Omega)} + \epsilon \|g\|_{H^1(\Omega)}
			\le \|g\|_{ \beta L^2(\Omega) }
					\|p_1\|_{ \beta^{-1} H^2(\Omega) } + \epsilon \|g\|_{H^1(\Omega)}
	\end{align*}
	and, by combining these estimates,	
	\begin{align*}
		(g,\nabla\cdot\nabla^{\epsilon} p_{\mcF})_{L^2(\Omega)}
			&\les \| g\|_{H^1_0(\Omega)+\beta L^2_0(\Omega)}
			\|p_{\mcF}\|_{H^1(\Omega)+ \beta^{-1} H^2(\Omega)} + \epsilon \|g\|_{H^1(\Omega)}\\
			&\les \| g\|_{H^1_0(\Omega)+\beta L^2_0(\Omega)}
			\|p_{\mcF}\|_{H^1(\Omega) + \beta^{-1} H^2(\Omega)} + \epsilon h_k^{-1}
				\|\mcF\|_{(X_{-,k})^*}.
	\end{align*}
	By combining these results, we immediately obtain~\eqref{eq:aux2}. This
	finishes the proof.
\end{proof}

\def\proof{\emph{Proof of inequality~\eqref{lem:equiv:aux2}.}}
\begin{proof}
	First note that
	\begin{align*}
		&\|\psi_k p_k\|_{H^1(\Omega)} \eqsim \|\nabla (\psi_k p_k) \|_{L^2(\Omega)} + \|\psi_k p_k\|_{L^2(\Omega)}\\
		&\qquad	 \les \|(\nabla \psi_k) p_k \|_{L^2(\Omega)} +  \|\psi_k \nabla p_k \|_{L^2(\Omega)} + \|\psi_kp_k\|_{L^2(\Omega)}\\
			&\qquad \le \|\nabla \psi_k\|_{L^{\infty}(\Omega)} \| p_k \|_{L^2(\Omega)} 
								+ \| \psi_k\|_{L^{\infty}(\Omega)} \|\nabla p_k \|_{L^2(\Omega)} 
								+ \| \psi_k\|_{L^{\infty}(\Omega)} \|p_k\|_{L^2(\Omega)}
	\end{align*}
	holds due to the product rule and H\"older's inequality. Here, $\|\cdot\|_{L^{\infty}(\Omega)}$ is the
	standard $L^{\infty}$-norm (essential supremum). As $\psi_k(\xi)$ is piecewise linear and 
	$\psi_k(\xi)\in[0,1]$, $|\nabla \psi_k(\xi)|\le h_k^{-1}$ holds for almost all $\xi\in \Omega$. Using this, $|\psi_k(\xi)|\le1$
	and a standard inverse inequality for estimating $\|\nabla p_k \|_{L^2(\Omega)}$ from above, one obtains
		$\|\psi_k p_k\|_{H^1(\Omega)} \les (1+h_k^{-1}) \|p_k\|_{L^2(\Omega)}$
	and, assuming $h_k \les 1$, the desired result.
\end{proof}

\def\proof{\emph{Proof of inequality~\eqref{eq:kinv}.}}
\begin{proof}
	Assume that the mesh consists of the elements $\mathcal{T}_i$ for $i=1,\ldots,M_k$.
	Note that
	it suffices to show~\eqref{eq:kinv} for each element $\mathcal{T}_i$ or that
	\begin{equation}\label{eq:kinv2}
		(p_k,p_k)_{L^2(\mathcal{T}_i)}\le C (p_k,\psi_k p_k)_{L^2(\mathcal{T}_i)}
	\end{equation}
	for all $p_k\in \hat{P}_k$.
	Note that $\psi_k = 1$ on all interior elements (no vertex is located on $\partial\Omega$), so~\eqref{eq:kinv2} 
	holds with $C=1$ in this case. So, it remains to show the inequality for the elements
	elements where one (case~1) or two vertices (case~2) are located on $\partial\Omega$. (Note that by
	assumption each element has at least one vertex in the interior of $\Omega$).

	Note that, both $p_k$ and $\psi_k$, are linear functions.
	The integrals in~\eqref{eq:kinv2} can be computed using the reference element 
	$\Delta:=\{(\xi_1,\xi_2) \in (0,1)^2\;:\; \xi_1+\xi_2 < 1 \}$. Using the linear transformation
	$\Phi_i:\Delta\rightarrow \mathcal{T}_i$, the substitution rule states that~\eqref{eq:kinv2}
	is equivalent to
	\begin{equation}\label{eq:kinv3b}
	  |\mbox{det} \nabla \Phi_i |(\hat{p}_k,\hat{p}_k)_{L^2(\mathcal{T}_i)}
	  	\le C |\mbox{det} \nabla \Phi_i |(\hat{p}_k,\hat{\psi}_k \hat{p}_k)_{L^2(\mathcal{T}_i)},
	\end{equation}
	where $\hat{p}(\xi) = u(\Phi_i^{-1}(\xi))$ and $\hat{\psi}(\xi) = \psi(\Phi_i^{-1}(\xi))$.
	Here, $|\mbox{det} \nabla \Phi_i |$ is the absolute value of the Jacobi determinant of the transformation. As this
	is a positive constant, it can be canceled out.

	For the \emph{case~1}, the function $\hat{\psi}_k$ takes the value $1$
	on two vertices and the value $0$ on one vertex. We assume without loss of generality, that it takes the value $0$
	on $(0,1)$. So, we obtain $\hat{\psi}_k(\xi)=1-\xi_2$.
	Using the ansatz $\hat{p}_k(\xi_1,\xi_2):=a_0+a_1 \xi_1+a_2 \xi_2$,
	we can compute both integrals in~\eqref{eq:kinv3b} and obtain that~\eqref{eq:kinv3b} holds for $C=\frac13\left(6+\sqrt{6}\right)$.

	For the \emph{case~2}, the function $\hat{\psi}_k$ takes the value $0$
	on two vertices and the value $1$ on one vertex. We assume without loss of generality that it
	takes the value $1$ on the vertex $(0,1)$. So, we obtain 
	$\hat{\psi}_k(\xi)=\xi_2$. Here we obtain that~\eqref{eq:kinv3b}
	holds for $C=\left(4+\sqrt{6}\right)$. This finishes the proof.
\end{proof}
\def\proof{\emph{Proof.}}

\bibliographystyle{amsplain}

\providecommand{\bysame}{\leavevmode\hbox to3em{\hrulefill}\thinspace}
\providecommand{\MR}{\relax\ifhmode\unskip\space\fi MR }
\providecommand{\MRhref}[2]{%
  \href{http://www.ams.org/mathscinet-getitem?mr=#1}{#2}
}
\providecommand{\href}[2]{#2}

\mbox{}

\begin{center}
		First Published in SINUM in  53 (6), p. 634 - 2654, 2015,
		published by the Society of Industrial and Applied Mathematics (SIAM)\\
		\url{http://epubs.siam.org/doi/10.1137/140969658} \\
		Copyright by SIAM. Unauthorized reproduction of this article is  prohibited.
\end{center}

\end{document}